\newtheorem{thm}{Theorem}
\newtheorem{lemma}{Lemma}
\newtheorem{prop}{Proposition}
\newtheorem{cor}{Corollary}
\theoremstyle{remark}
\newtheorem{rem}{Remark}
\theoremstyle{definition}
\newcommand{\R}{\mathbb{R}}
\newcommand{\Z}{\mathbb{Z}}
\newcommand{\cG}{\mathcal{G}}
\newcommand{\ip}[2]{\langle#1,#2\rangle}
\newcommand{\e}{\mathrm{e}}
\title{Frame sets for generalized $B$-splines}
\author{A.~Ganiou D.~Atindehou}
\address{Institut de Math\'ematiques et de Sciences Physiques (IMSP), 01 BP 613, Porto-Novo, 
B\'enin}
\email{ganiouatindehou@gmail.com}
\author{Yebeni B.~Kouagou}
\address{Institut de Math\'ematiques et de Sciences Physiques (IMSP), 01 BP 613, Porto-Novo, 
B\'enin}
\email{yebeni.kouagou@gmail.com}
\author{Kasso A.~Okoudjou}
\address{Department of Mathematics, University
of Maryland, College Park, MD 20742}
\email{kasso@math.umd.edu}
\subjclass[2000]{Primary 42C15; Secondary 42C40}
\date{\today}
\keywords{Gabor frames, frame set, $B$-splines}
\begin{document}

\begin{abstract} The frame set of a function  $g\in L^2(\R)$ is the subset of all parameters $(a, b)\in \R^2_+$ for which the time-frequency shifts of $g$ along $a\Z\times b\Z$ form a Gabor frame for $L^2(\R).$ In this paper,  we investigate the frame set of a class of functions that we call \emph{generalized $B-$splines} and which  includes the $B-$splines. In particular, we add many new points to the frame sets of these functions. In the process, we generalize and unify some recent results on the frame sets for this class of functions.
\end{abstract}

\maketitle \pagestyle{myheadings} \thispagestyle{plain}
\markboth{A. G. D.  Atindehou, Y. B. Kouagou, And  K. A. Okoudjou}{FRAME SETS FOR GENERALIZED $B$-SPLINES}


\section{Introduction}\label{sec1}

  Given $g\in L^2(\R)$ and $a, b>0$, the collection of functions 
\begin{equation*}
  \cG(g,a, b)=\left\{M_{\ell b}T_{ka}g =\e^{2\pi i\ell b\cdot}g(\cdot-ka):(\ell,k)\in \mathbb{Z}^2\right\}
\end{equation*} 
is a {\em Gabor frame} for $L^2(\R)$ if there exist  $A, B>0$ such that 
\begin{equation*}
 \label{frame inequality}
 A \|f\|^2_2\leq\sum_{\ell,k\in \Z}  |\ip{f}{M_{\ell b}T_{ka}g}|^2\leq B\|f\|^2_2,
 \end{equation*}
 for all $f\in L^2(\R)$.  It follows that there exists a function $h\in L^2(\R)$ such that for every $f\in L^2$ we have 
$$
f=\sum_{k, \ell \in \Z}\ip{f}{M_{b\ell}T_{ka}h} M_{b\ell}T_{ka}g=\sum_{k, \ell \in \Z}\ip{f}{M_{b\ell}T_{ka}g} M_{b\ell}T_{ka}h.
$$ 
For more details on Gabor analysis we refer to \cite{Ole4, Groc2001}.

For  $g\in L^2(\R)$, finding  the set of all points $(a, b)\in \R_+^2$ such that $ \cG(g,a, b)$ is a Gabor frame for $L^2(\R)$ remains one  of the field's fundamental yet mostly unresolved question. 
The set of all such parameters is customarily referred to as  the {\em frame set } of $g$ and given by
$$
\mathcal{F}(g)=\left\{(a,b)\in \R_+^2:\, \cG(g,a, b)\ \mbox{is a frame}\right\}.
$$

For a recent survey of the structure of $\mathcal{F}(g)$ we refer to \cite{Groch}. A particular property of the frame set of $g$ in the modulation space $M^1(\R)$ (\cite{Groc2001}), was obtained by Feichtinger and Kaiblinger who  proved that in this case, $\mathcal{F}(g)$ is an open subset of $\R_+^2 $ \cite{FeiKai04}. However, the complete characterization of $\mathcal{F}(g)$ is only known in the following cases. 

\begin{enumerate}
\item[(a)] The Gaussian $g(x)=e^{-\pi x^2}$, \cite{Lyuba, Seip, SeiWal}.
\item[(b)] The hyperbolic secant $g(x)=\tfrac{1}{\cosh x}$, \cite{JanStroh}.
\item[(c)] The one-sided and two-sided exponentials, 
$g(x)=\e^{-x}\chi_{[0,+\infty]}(x)$ and \mbox{$g(x)=\e^{-|x|}$}, \cite{Jan, Janss}.
\item[(d)] The class of totally positive functions of finite type, \cite{Grosto}.
\item[(e)] The class of totally positive functions of Gaussian type, \cite{GrRoSt}.
\item[(f)] The characteristic function of an interval, i.e., $g(x)=\chi_{[0,c]}(x), c>0$, \cite{DaiSun, GuHan, Jans}.
\end{enumerate}

Recent progress has been made in  characterizing  the frame set for $B-$splines, i.e., $$g_1(x)= \chi_{[-1/2, 1/2]},\quad \text{and}\quad g_N(x)=g_1\ast g_{N-1}(x)\quad \text{for}\quad N\geq 2 $$ \cite{Olehon1, Olehon, KarGro, Kloosto, Lemniel}. 
For $N\geq 2$, $g_N$ belongs to the modulation space $M^1$ hence  $\mathcal{F}(g_N)$ is open. Nonetheless, finding $\mathcal{F}(g_N)$ for $N\geq 2$ is listed as one of the six problems in frame theory \cite{Ole1}.

By combining some of the aforementioned  results, we know that the region 
$$
((0, N/2]\times (0, 4/(N+3a)] ) \cup ([N/2, N]\times (0, 1/a))
$$ 
is included  in $\mathcal{F}(g_N)$.

\subsection{Our contributions} 
The main contribution  of this paper establishes that the region  $ [N/3, N/2]\times [4/(N+3a), 2/N)$  is contained in $\mathcal{F}(g_N)$. Consequently, the connected set 
 
$$
E=\big\{(a,b)\in \R_+^2:\,  ab<1,\, 0<a<N, \, 0<b\leq \max(\frac{2}{N}, \tfrac{4}{N+3a})\big\}
$$ 
is included in $\mathcal{F}(g_N)$. We refer to Figure~\ref{fig:figure1} for an illustration of the known results as well as our new results for $N=2$. In fact, we establish our results for the class of \emph{generalized $B-$splines}, $V_{N,a}$ introduced in \cite{Olehon} and given by

\begin{equation*}
 V_{N,a}:=\left\{g\in C(\mathbb{R}):\,  \mbox{supp}\ g=\left[-\frac{N}{2}, \frac{N}{2}\right], g\ \mbox{is real-valued and satisfies}\ (A1)-(A3)\right\}
\end{equation*}
where\newline

\noindent $(A1)$ $g$ is symmetric around the origin;\newline
\noindent $(A2)$ $g$ is strictly increasing on $\left[-\frac{N}{2}, 0\right]$;\newline
\noindent $(A3)$ If $a<\frac{N}{3}$, then $\bigtriangleup^2_ag(x)\geq 0,x\in\left[-\frac{N}{2},-\frac{N}{4}+\frac{3a}{4}\right]$, and  if $a\geq\frac{N}{3}$, then
\mbox{$\bigtriangleup^2_ag(x)\geq0$},
\mbox{$x\in \left[-\frac{N}{2},0\right]\bigcup\left\{-\frac{N}{4}+\frac{3a}{4}\right\}$}, where
$$ 
\bigtriangleup^2_ag(x)=g(x)-2g(x-a)+g(x-2a).
$$ 
We point out that the $B-$spline $g_N$ belongs to $\displaystyle{\bigcap_{0<a<N}}V_{N,a}$ for all $N\geq 2$, and we refer to  \cite[Section 3]{Olehon} for more examples of functions in $V_{N,a}$.

Before stating our main result, we first recall the following well-known facts on Gabor frames generated by continuous compactly supported functions, which will be the basis of our work. We refer to \cite{Ole4} for  proofs.

\begin{prop}\label{known}
 Let $N\geq 1$, and assume that $g:\mathbb{R}\rightarrow\mathbb{C}$ is a continuous function with $\text{supp}\ g\subseteq[-\frac{N}{2},\frac{N}{2}]$. Then the following holds:
 \begin{enumerate}
\item If $\cG(g,a, b)$ is a frame, then $ab<1$ and $a<N$.
\item\cite{Lemniel} Assume that $0<a<N$, $0<b\leq\frac{2}{N+a}$ and $\inf_{x\in[-\frac{a}{2},\frac{a}{2}]}|g(x)|>0$. Then $\cG(g,a, b)$ is a frame, and there is a unique 
dual $h\in L^2(\mathbb{R})$ such that \mbox{supp$h\subseteq\left[-\frac{a}{2},\frac{a}{2}\right]$}.
\item \cite{Olehon1} Assume that $\frac{N}{2}\leq a<N$ and $0<b<\frac{1}{a}$. If $g(x)>0,x\in]-\frac{N}{2},\frac{N}{2}[$, then $\cG(g,a, b)$ is a frame.
\item \cite{Olehon} Suppose that $0<a<N$, $\frac{2}{N+a}<b\leq\frac{4}{N+3a}$ and $g\in V_{N,a}$. Then $\cG(g,a, b)$ is a frame, and there is a unique 
dual $h\in L^2(\mathbb{R})$ such that
supp$h\subseteq\left[-\frac{3a}{2},\frac{3a}{2}\right]$.
 \end{enumerate}
\end{prop}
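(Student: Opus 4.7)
The plan is to address the four items of Proposition~\ref{known} in turn. Throughout, the fundamental tool is the Wexler--Raz biorthogonality relations, which for continuous compactly supported $g$ reduce to a finite, pointwise system over the fundamental domain $[-a/2, a/2]$.

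For item~(1), the density condition $ab < 1$ is a standard necessary condition for a Gabor frame, following from classical density results (e.g.\ the arguments in \cite{Groc2001, Ole4}). For the bound $a < N$, one argues by contradiction: if $a > N$, the supports $[ka - N/2, ka + N/2]$ leave open gaps, and any nonzero function supported in such a gap is annihilated by every $M_{\ell b}T_{ka}g$, violating the lower frame bound. The borderline case $a = N$ is handled by a slightly more delicate argument exploiting the vanishing of $g$ at $\pm N/2$ forced by continuity and compact support.

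For items~(2)--(4), the strategy is to construct an explicit dual window $h$ with prescribed small support and verify the Wexler--Raz equations
\begin{equation*}
\sum_{k \in \Z} h(x - ka)\,\overline{g(x - ka - m/b)} = \tfrac{1}{b}\,\delta_{m,0}, \qquad m \in \Z,
\end{equation*}
which suffice to conclude that $\cG(g,a,b)$ is a frame with $h$ as dual window. In item~(2), one seeks $h$ supported in $[-a/2, a/2]$. Since the intervals $[ka - a/2, ka + a/2]$ tile $\R$, the sum over $k$ has a single nonzero term at each $x$. The hypothesis $b \leq 2/(N+a)$ translates to $1/b \geq (N+a)/2$, which guarantees that for $m \ne 0$ the argument $x - ka - m/b$ always lies outside $[-N/2, N/2]$, so $g(x - ka - m/b) = 0$. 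The Wexler--Raz system then collapses to the pointwise scalar equation $h(y)\overline{g(y)} = 1/b$ on $[-a/2, a/2]$, uniquely solved by $h(y) = 1/(b\,\overline{g(y)})$; the infimum assumption ensures $h \in L^\infty$, hence in $L^2(\R)$ by compact support. Item~(3) is handled by a parallel construction, the enlargement $a \geq N/2$ providing enough separation of translates that positivity of $g$ on $(-N/2, N/2)$ suffices. For item~(4), one seeks $h$ supported in $[-3a/2, 3a/2]$; now up to three translates $T_{ka}g$ overlap the support of $h$ at each $x$, and the Wexler--Raz system becomes a pointwise $3 \times 3$ linear system whose solvability is engineered precisely by the second-difference hypothesis (A3) defining $V_{N,a}$.

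The chief obstacle is item~(4). The symbol of the Wexler--Raz system is no longer a scalar multiplication, and its invertibility with a uniformly sign-definite determinant is exactly what condition (A3) is designed to ensure. The two cases in (A3) ($a < N/3$ versus $a \geq N/3$) reflect a geometric change in how the supports of consecutive translates interact on the subinterval around $-N/4 + 3a/4$. Verifying that (A3) yields a uniformly invertible $3 \times 3$ symbol requires a careful case analysis over $[-a/2, a/2]$, and the $L^2$-boundedness of the resulting $h$ is then deduced from the symmetry and monotonicity hypotheses (A1)--(A2). This delicate combinatorial--analytic step is the technical heart of the argument and is where the definition of the generalized $B$-spline class plays its essential role.
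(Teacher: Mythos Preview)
The paper does not prove Proposition~\ref{known}; it is presented as a summary of known results, with each item attributed to the literature and prefaced by the remark ``We refer to \cite{Ole4} for proofs.'' So there is no proof in the paper against which to compare your proposal.

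That said, your sketches for items~(1), (2), and (4) are essentially correct and in fact coincide with the paper's later matrix framework specialized to $m=1$ and $m=2$: the duality conditions reduce to $G_m(x)\,(h(x+ka))_k = b\,e_m$ on $[-a/2,a/2]$, with $G_1(x)=g(x)$ a scalar and $G_2(x)$ the $3\times 3$ matrix you describe, whose invertibility is exactly what (A1)--(A3) guarantee. (Minor point: the paper's normalization puts $b\delta_{\ell,0}$ on the right-hand side, not $\tfrac{1}{b}\delta_{m,0}$.)

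Your treatment of item~(3), however, is a genuine gap. The phrase ``parallel construction'' suggests you intend to reuse the scalar argument of item~(2), but that argument requires $b\le \tfrac{2}{N+a}$ to kill all nonzero frequency shifts, whereas item~(3) allows $b$ all the way up to $\tfrac{1}{a}>\tfrac{2}{N+a}$. In this larger range the off-diagonal terms $g(x-\ell/b+ka)$ need not vanish, and no single $G_m$ with fixed $m$ covers the whole strip $\{N/2\le a<N,\ 0<b<1/a\}$. The proof in \cite{Olehon1} is not a repetition of (2); you would need either to cite it or supply a different argument.
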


To prove our results, we use the following partition of the subset $\{ (a, b) \in E, 0 < a < \frac{N}{2}\}$ of $E$.  It seems that this partitioning method could be used to find more points in the frame set of functions in $V_{N,a}$. In fact, in a forthcoming paper \cite{AtiKouOko2}, we expand our method to add many new points to the frame set of the $2$-spline, $g_2$. 

 For $N\geq 2$ and given $g\in V_{N,a}$, our method consists in proving that  for each $m\geq 2$, $T_m\subset \mathcal{F}(g)$ where 

\begin{equation}
\hspace{-2mm}
T_m:=\left\{(a,b)\in\R^2_+:a\in \left(\frac{N(m-2)}{2m-3},\frac{N}{2}\right), b\in \left(\frac{2(m-1)}{N+(2m-3)a},\frac{2m}{N+(2m-1)a} \right], b<\frac{2}{N}\right\},
\end{equation} and 
$$T_1=\left\{(a,b)\in\R^2_+:a\in \left(0,\frac{N}{2}\right), b\in \left(0,\frac{2}{N+a} \right], b<\frac{2}{N}\right\}.$$

For the windows in $V_{N,a}$, the set $T_1$ was already investigated in \cite[Theorem 2]{Lemniel}, while $T_2$ was investigated in \cite[Theorem 1.2]{Olehon}. In both cases, it was shown that there exists a compactly supported dual window. In this paper we prove that for all $m\geq 3$, $T_m\subset \mathcal{F}(g)$ which implies that 
 
 \begin{equation*}
  T:=\bigcup^\infty_{m=1}T_m\subset  \mathcal{F}(g).
 \end{equation*}

More precisely, the following result, will be proved in Section~\ref{sec2} after we establish a number of technical results. 

\begin{thm}\label{th:main1} Given $a>0$ and $N\geq 2$, suppose that $g \in V_{N,a}$. For $m\geq3$, let $(a,b)\in T_m$ . Then the Gabor system $\cG(g,a, b)$ is a  frame for $L^2(\mathbb{R})$, and there is
a unique dual window $h \in L^2(\mathbb{R})$ such that ${\text supp}\,  h\subseteq [-\frac{2m-1}{2}a,\frac{2m-1}{2}a]$.
\end{thm}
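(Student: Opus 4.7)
The plan is to adapt the compactly-supported-dual construction used for $T_1$ in \cite{Lemniel} and for $T_2$ in \cite{Olehon} to all $m\geq 3$. Concretely, I would look for $h$ with $\mathrm{supp}\,h\subseteq [-\tfrac{(2m-1)a}{2},\tfrac{(2m-1)a}{2}]$ solving the Wexler--Raz/Walnut duality identities
\begin{equation*}
\sum_{k\in\Z} g(x-ka)\,h(x-ka-n/b) \;=\; b\,\delta_{n,0},\qquad \text{a.e.\ }x,\ n\in\Z.
\end{equation*}
Once such an $h$ is produced and shown to be bounded (hence in $L^2$ by compactness of its support), $\cG(h,a,b)$ is automatically Bessel, and the above identities yield that $\cG(g,a,b)$ is a frame with dual generator $h$; uniqueness with the prescribed support comes for free since the duality equations, when restricted to that support, form a determined system.

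To turn this into a finite problem, I would decompose the support of $h$ into the $2m-1$ half-open blocks $I_k:=[(k-\tfrac12)a,(k+\tfrac12)a)$, $k=-(m-1),\dots,m-1$, and write $h_k:=h\,\chi_{I_k}$. For a test point $x\in[-a/2,a/2]$, the two constraints defining $T_m$ play complementary roles: the upper bound $b\leq 2m/(N+(2m-1)a)$ forces $1/b\geq (N+(2m-1)a)/(2m)$, which ensures that only the indices $n\in\{-m,\dots,m\}$ produce a nonvanishing overlap between $\mathrm{supp}\,g(\cdot-ka)$ and $\mathrm{supp}\,h(\cdot-ka-n/b)$, so the Walnut sum closes; the lower bound $b>2(m-1)/(N+(2m-3)a)$ rules out the previous case already handled in \cite{Olehon} and, more importantly, guarantees that the translates $n/b$ for $|n|\leq m$ lie in well-separated bands so that only a fixed, geometrically well-controlled collection of translates of $g$ couples with each $h_k$. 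Translating the identities into pointwise relations among the $h_k$'s at the shifted nodes $x,x\pm a,\dots,x\pm (2m-1)a$, one obtains a banded linear system of size $\sim 2m$ whose coefficients are products of values of $g$.

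The core of the argument — and in my view the main obstacle — is to show that this banded system is uniquely solvable and yields a bounded solution. Here the full strength of the defining axioms of $V_{N,a}$ enters: $(A2)$ guarantees that the ``diagonal'' coefficients (which involve $g$ evaluated near its maximum at $0$) are strictly positive and bounded below on the relevant intervals, while the second-difference positivity $(A3)$ provides the sign/monotonicity structure needed to propagate solvability from the outermost block $h_{\pm(m-1)}$ (where only one nontrivial equation survives thanks to the support bound) inward to $h_0$ by a recursive elimination, mirroring the structure exploited for $m=2$ in \cite[Theorem~1.2]{Olehon} but now iterated $m-1$ times. Once the $h_k$ are obtained as bounded functions on each $I_k$, patching them yields $h\in L^\infty$ with $\mathrm{supp}\,h\subseteq [-\tfrac{(2m-1)a}{2},\tfrac{(2m-1)a}{2}]$, and the conclusion of the theorem follows as described in the first paragraph. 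The technical lemmas developed earlier in Section~\ref{sec2} are designed precisely to carry out this recursive elimination in a unified way across all $m\geq 3$.
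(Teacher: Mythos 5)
Your overall strategy is the same as the paper's: reduce the duality conditions to a finite linear system of size $2m-1$ in the unknowns $h(x+ka)$, $|k|\le m-1$, for $x\in[-\tfrac a2,\tfrac a2]$, and then argue that this system is uniquely solvable with a solution bounded uniformly in $x$. Your bookkeeping of which indices survive (only $|k|,|\ell|\le m-1$ contribute a.e.) is correct and matches the paper's Proposition~\ref{prop1} and the matrix equation \eqref{eq-matrix}.

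However, there is a genuine gap at exactly the point you yourself flag as ``the main obstacle'': you never actually prove that the resulting matrix $G_m(x)$ is invertible, you only assert that a ``recursive elimination'' propagating from the outermost blocks inward will work because of $(A2)$ and $(A3)$. That invertibility is the entire mathematical content of the theorem, and it occupies Lemmas~\ref{lem:diagonal}--\ref{lem:subdeter} and Corollaries~\ref{cor:detgm}--\ref{cor:invertgm} of the paper. Concretely, what is missing is: (i) the vanishing pattern of the entries of $G_m(x)$ (Lemma~\ref{lem:diagonal}), which shows $G_m(x)$ is block upper triangular with an upper-triangular block $C_m(x)$ and a \emph{tridiagonal} block $A_m(x)$; (ii) the case analysis on the sign of $\bigl(\tfrac N2-\tfrac1b\bigr)-\bigl(a-\tfrac N2\bigr)$ and the partition of $[-\tfrac a2,0]$ into the sets $S_m$ and $Z_m$, on which $|A_m(x)|$ is respectively a product of positive diagonal entries or such a product times exactly one $2\times2$ minor $|A_{k,k-1}(x)|$; and (iii) the proof that $|A_{k,k-1}(x)|>0$, which in the paper follows from the symmetry $(A1)$ and strict monotonicity $(A2)$ of $g$ together with the inequalities defining $T_m$ --- not from the second-difference condition $(A3)$, to which you attribute the key role. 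Without these steps the claim that your banded system is ``uniquely solvable and yields a bounded solution'' is unsupported, and the uniform lower bound on $|G_m(x)|$ over the compact interval (needed for boundedness of $h$) is likewise not established. One further small imprecision: the coefficients of the system are values of $g$, not products of values of $g$; products only appear when you expand the determinant.
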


Before proving Theorem~\ref{th:main1} in Section~\ref{sec2} we establish a number of technical results. The key technical result is Corollary~\ref{cor:detgm} in which we show that a certain tri-diagonal matrix is invertible by computing its determinant. As will be apparent from our proofs, this framework generalizes and unifies \cite[Theorem 1.2]{Olehon} and \cite[Theorem 2]{Lemniel}. In addition, our result is established by showing that for $a>0,$ $N\geq 2$,  $g\in V_{N,a}$, and $m\geq 3$, there exists a unique bounded compactly supported  dual window $h:=h_{a, N, m}$. Note however, that in contrast to $g$, this dual is discontinuous, and hence does not belong to the modulation space $M^1(\R)$. In particular, this indicates that the size of the support of the dual increases as the parameters $a, b$ approach the hyperbola $b=1/a$. For the type of techniques we develop in the sequel to be extended to other values of $a $ and $b$, it seems that a better understanding of the support of the plausible dual frame is needed.  It would be interesting to know whether or not for $g\in V_{N,a}$, and $(a, b)\in \mathcal{F}(g)$, there exist  compactly supported dual windows $h$. To the best of our knowledge this question has not been fully investigated. For more on the support properties of dual frames, we refer to \cite{Chrikim1, Olehon, Laug} and the references therein.

An immediate consequence of Theorem~\ref{th:main1} is.

\begin{cor}\label{cor:main1} Given $a>0$ and $N\geq 2$, suppose that $g \in V_{N,a}$.
 If $(a,b)\in T$, then the Gabor system $\cG(g,a, b)$ is a  frame for $L^2(\mathbb{R})$.
\end{cor}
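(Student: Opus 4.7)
The plan is to exploit the decomposition $T=\bigcup_{m=1}^{\infty}T_m$ given in the introduction: any $(a,b)\in T$ lies in some $T_m$, and I would treat the three regimes $m=1$, $m=2$, and $m\geq 3$ separately, each using a result already established in the paper.

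For $m=1$, the defining constraints $0<a<N/2<N$ and $0<b\leq 2/(N+a)$ match the hypotheses of Proposition~\ref{known}(2) except for the pointwise lower bound $\inf_{x\in[-a/2,a/2]}|g(x)|>0$. I would derive this bound from the axioms of $V_{N,a}$: by (A1)--(A2) together with continuity of $g$ and $\mathrm{supp}\,g=[-N/2,N/2]$, the function $g$ vanishes at $\pm N/2$, is strictly increasing on $[-N/2,0]$, and symmetric, forcing $g>0$ on the open interval $(-N/2,N/2)$. Since $a<N$, the compact set $[-a/2,a/2]$ is contained in $(-N/2,N/2)$, so $g$ attains a strictly positive minimum there; Proposition~\ref{known}(2) then delivers the frame property.

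For $m=2$, the constraints $0<a<N/2<N$ and $2/(N+a)<b\leq 4/(N+3a)$ place $(a,b)$ inside the region handled by Proposition~\ref{known}(4), which directly yields the frame. For every $m\geq 3$, Theorem~\ref{th:main1} applies verbatim to $(a,b)\in T_m$ and delivers the same conclusion. Assembling the three cases proves that $\cG(g,a,b)$ is a frame for $L^2(\R)$ whenever $(a,b)\in T$.

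Since the corollary is advertised as an immediate consequence of Theorem~\ref{th:main1}, I do not expect a serious obstacle. The only subtle step is the positivity check in the $m=1$ case, which reduces cleanly to (A1)--(A2). A minor bookkeeping item is to confirm that, for each admissible $a$, the half-open intervals $\bigl(2(m-1)/(N+(2m-3)a),\,2m/(N+(2m-1)a)\bigr]$ together with $(0,2/(N+a)]$ telescope so as to cover the full $b$-range appearing in the definition of $T$; this is a direct endpoint-matching computation and ensures that no point of $T$ escapes the case analysis.
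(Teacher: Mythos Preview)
Your proposal is correct and follows the same route as the paper: the one-line proof there simply invokes the partition $T=\bigcup_{m\geq 1}T_m$ and appeals to Theorem~\ref{th:main1}, implicitly relying (as noted just before Proposition~\ref{prop1}) on the previously established cases $m=1,2$, which are exactly Proposition~\ref{known}(2) and (4). Your version is merely more explicit in separating out $m=1,2$ and in verifying the positivity hypothesis needed for Proposition~\ref{known}(2).
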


\begin{rem}
Observe that in Theorem~\ref{th:main1} we only consider  $0<a<N/2$. However, our result extends to the regime $N/2\leq a<N$, which has already been considered. So we choose not to reprove the result in this case.   

Figure~\ref{fig:figure1} is a continuation of \cite[Figure 1]{Lemniel}. It  illustrates  and compares Theorem~\ref{th:main1} to Proposition~\ref{known} and the results in \cite{Olehon, Lemniel}.
\end{rem}

\begin{figure}[!h]
\hspace{-2cm}
\includegraphics[scale=0.35]{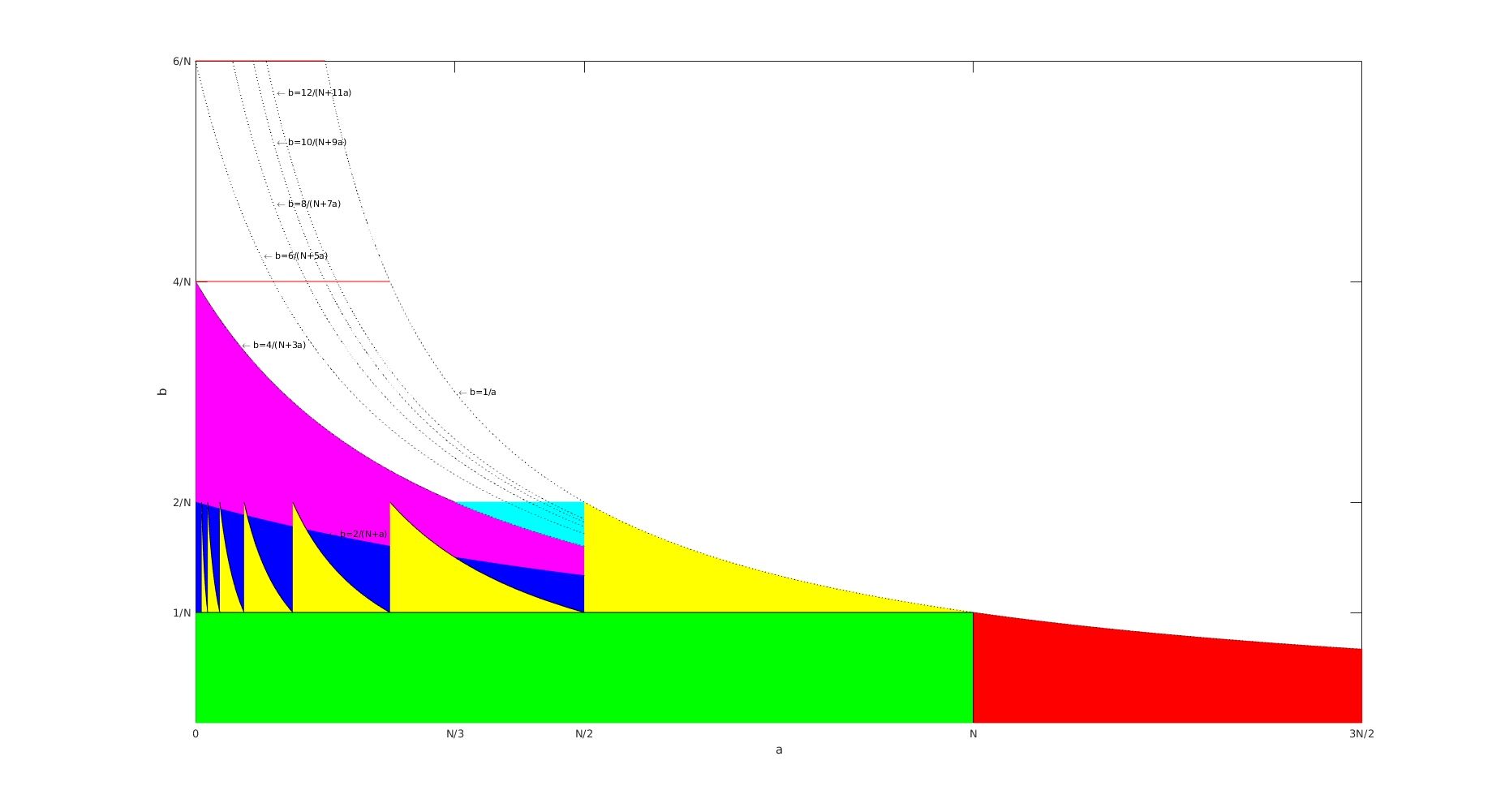}
\caption{ A sketch of  $\mathcal{F}(g_N)$ for $N=2$. The red region contains points $(a,b)$ for which $\cG\left(g_2,a,b\right)$ is not a frame. All other colors indicate the frame property. The green region is the classical: "painless expansions" \cite{DaGroMe}, and the yellow region is the result from \cite{Olehon1}. The blue and the magenta regions are respectively from \cite{Lemniel} and \cite{Olehon}. The cyan region is 
the result in Theorem~\ref{th:main1}.}
 \label{fig:figure1}
\end{figure}


\section{$T$ as a subset of the frame set for functions in $V_{N,a}$}\label{sec2}

Let $a>0, N\geq 2$ and $m\geq3$. For a function $g\in V_{N,a}$ and $(a,b)\in T_m$, we prove that $\cG(g,a,b)$ is a frame for $L^2(\R)$ by constructing a (unique) function $h\in L^2(\R)$ such that 
$\cG(g,a,b)$ and $\cG(h,a,b)$ are dual frames. This is achieved by using the following special case of a well-known sufficient and necessary condition for two Bessel Gabor systems to be dual of each other, we refer to \cite{Ole4, Jan2, Jan1} for details. We point out that this result is new for $m\geq 3$, but has already been established for $m=1$ (\cite{Lemniel}), and $m=2$ (\cite{Olehon}).

\begin{prop}\label{prop1} Given  $N\geq 2$ and $0<a<N$, suppose that  $g \in V_{N,a}$. For each $m\geq 1$, let  $h$ be a bounded real-function supported on $ [-\frac{2m-1}{2}a,\frac{2m-1}{2}a]$.
 Then the Gabor systems $\cG(g,a, b)$ and $\cG(h,a, b)$ are dual frames for $L^2(\mathbb{R})$ if and only if 
 \begin{equation}\label{eq:dualred}
  \sum_{k=1-m}^{m-1}{g(x-\ell/b+ka)h(x+ka)}=b\delta_{\ell,0},\ |\ell|\leq m-1\ \mbox{a.e.}\ x\in [-\frac{a}{2},\frac{a}{2}]. 
 \end{equation}
\end{prop}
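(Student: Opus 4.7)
The plan is to invoke the standard Wexler-Raz/Ron-Shen/Janssen duality characterization for dual Gabor frames, and then use the compact-support hypotheses on $g$ and $h$ together with the parameter regime $(a,b)\in T_m$ to collapse the resulting infinite family of equations to the finite system \eqref{eq:dualred}.

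Since $g$ is continuous and compactly supported and $h$ is bounded and compactly supported, a standard estimate based on the sup-norms and support sizes shows that $\cG(g,a,b)$ and $\cG(h,a,b)$ are Bessel sequences. The duality theorem (see \cite{Ole4, Jan2, Jan1}) then applies: the two systems form dual frames for $L^2(\R)$ if and only if
$$
\sum_{k\in\Z} g\!\left(x-\tfrac{\ell}{b}+ka\right)h(x+ka)=b\,\delta_{\ell,0},\qquad \ell\in\Z,\ \text{a.e. } x\in\R,
$$
where the complex conjugate on $h$ has been dropped using the real-valuedness hypothesis.

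The reduction to \eqref{eq:dualred} then proceeds in three steps. First, the left-hand side is $a$-periodic in $x$: the substitution $x\mapsto x+a$ amounts to reindexing $k\mapsto k-1$, so it suffices to check the identity on the fundamental domain $[-a/2,a/2]$. Second, for $x\in(-a/2,a/2)$, the inclusion $\text{supp}\,h\subseteq[-\tfrac{(2m-1)a}{2},\tfrac{(2m-1)a}{2}]$ forces $h(x+ka)=0$ unless $k\in\{1-m,\ldots,m-1\}$, truncating the $k$-sum to precisely the $2m-1$ terms appearing in \eqref{eq:dualred}. Third, for $|\ell|\geq m$, the defining upper bound $b\leq 2m/(N+(2m-1)a)$ of $T_m$ yields $|\ell|/b\geq N/2+(2m-1)a/2$, whereas $x+ka\in[-\tfrac{(2m-1)a}{2},\tfrac{(2m-1)a}{2}]$ for the truncated range of $k$; together these give $|x-\ell/b+ka|>N/2$ a.e., so $g(x-\ell/b+ka)=0$ and the $\ell$-th equation is trivially $0=0$. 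Combining the three reductions shows that the full duality condition is equivalent to \eqref{eq:dualred}.

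Nothing in the argument is conceptually difficult: the proof is essentially a support-and-periodicity reduction of a classical duality theorem. The main (minor) obstacle will be bookkeeping, namely checking carefully that the $k$-truncation coming from $\text{supp}\,h$ and the $\ell$-truncation coming from $\text{supp}\,g$ (together with the bound on $b$ defining $T_m$) align exactly to leave the $2m-1$ equations indexed by $|\ell|\leq m-1$, and that the Bessel estimates for $g$ and $h$ are uniform enough to justify citing the duality theorem in the form used above.
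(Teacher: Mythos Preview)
Your proposal is correct and follows essentially the same approach as the paper's proof: both invoke the standard duality characterization, note the Bessel property from boundedness and compact support, and then truncate the $k$- and $\ell$-ranges via the support conditions on $h$ and $g$ respectively. You correctly make explicit that the $\ell$-truncation relies on the upper bound $b\leq 2m/(N+(2m-1)a)$ from $T_m$, a hypothesis the paper's statement omits but its proof tacitly uses.
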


\begin{proof} We only consider the case $m\geq 3$.  Let $ x\in [-\frac{a}{2}, \frac{a}{2}]$. Then 
 $ h(x+ka)\neq0 \iff |k|\leq m-1.$ If in addition, we assume that  $|k|\leq m-1$, then 
$g(x-\ell/b+ka)\neq0\iff |\ell|\leq m-1.$

Since  $g$ and $h$ are bounded with compact support, then the Gabor systems $\cG(g,a, b)$  and $\cG(h,a, b)$ are Bessel sequences for all  $a,b>0$ \cite[Proposition 6.2.2]{Groc2001}. In addition, these systems are dual if and only if 
\begin{equation*}\label{eq:dualcond}
   \sum_{k\in\mathbb{Z}}{g(x-\ell/b+ka)\overline{h(x+ka)}}=b\delta_{\ell,0},\ \mbox{for a.e}\ x\in\left[-\frac{a}{2},\frac{a}{2}\right]. 
\end{equation*}
This last equation   reduces to~\eqref{eq:dualred} by the bounds on $k$ and $\ell$. 
\end{proof}

We can rewrite \eqref{eq:dualred} as a matrix-vector equation.

\begin{equation}
\label{eq-matrix}
 G_m(x)\left(\begin{array}{c}
                            h(x+(1-m)a) \\ \vdots \\ h(x)\\ \vdots \\ h(x+(m-1)a) \\                           
             \end{array}\right)=\left(\begin{array}{r} 0\\ \vdots \\ 0\\ b \\0\\ \vdots \\ 0\end{array}\right),\ \mbox{for a.e}\ x\in\left[-\frac{a}{2},\frac{a}{2}\right].
\end{equation}

 where $G_m(x)$ is the $(2m-1)\times(2m-1)$ matrix-valued function on $\left[-\frac{a}{2},\frac{a}{2}\right]$ defined by
\begin{eqnarray*}
G_m(x)&=& \left[g(x-\frac{\ell}{b}+ka)\right]_{1-m\leq\ell,k\leq m-1}=\\
 \end{eqnarray*}
\begin{equation*}
  \left(\begin{array}{cccccccc}
 g(x+\frac{m-1}{b}+(1-m)a) & \dots & \dots & g(x+\frac{m-1}{b}) &  \dots & \dots & g(x+\frac{m-1}{b}+(m-1)a)\\
 \vdots  & \vdots & \vdots & \vdots & \vdots & \vdots & \vdots \\
 g(x+(1-m)a) & \dots & \dots & g(x) & \dots & \dots & g(x+(m-1)a)\\
\vdots  & \vdots & \vdots & \vdots & \vdots & \vdots & \vdots\\
\vdots  & \vdots & \vdots & \vdots & \vdots & \vdots & \vdots\\
 g(x+\frac{1-m}{b}+(1-m)a) & \dots & \dots  & g(x+\frac{1-m}{b}) & \dots  & \dots  & g(x+\frac{1-m}{b}+(m-1)a)\\ 
 \end{array}\right)
\end{equation*}

The case  $m=1$ corresponds to  the matrix $G_1(x)=g(x)$ which was considered in \cite{Lemniel}. Similarly, the case $m=2$ corresponds to the matrix
$$G_2(x)= \begin{pmatrix}
 g(x+\frac{1}{b}-a) & g(x+\frac{1}{b}) & g(x+\frac{1}{b}+a)\\
 g(x-a) & g(x) & g(x+a)\\
 g(x-\frac{1}{b}-a) & g(x-\frac{1}{b}) & g(x-\frac{1}{b}+a)
\end{pmatrix}$$ considered in \cite{Olehon}. Proposition~\ref{prop1} is illustrated in Figure~\ref{fig:figure2}.

 \begin{figure}[!h]
\includegraphics[scale=0.35]{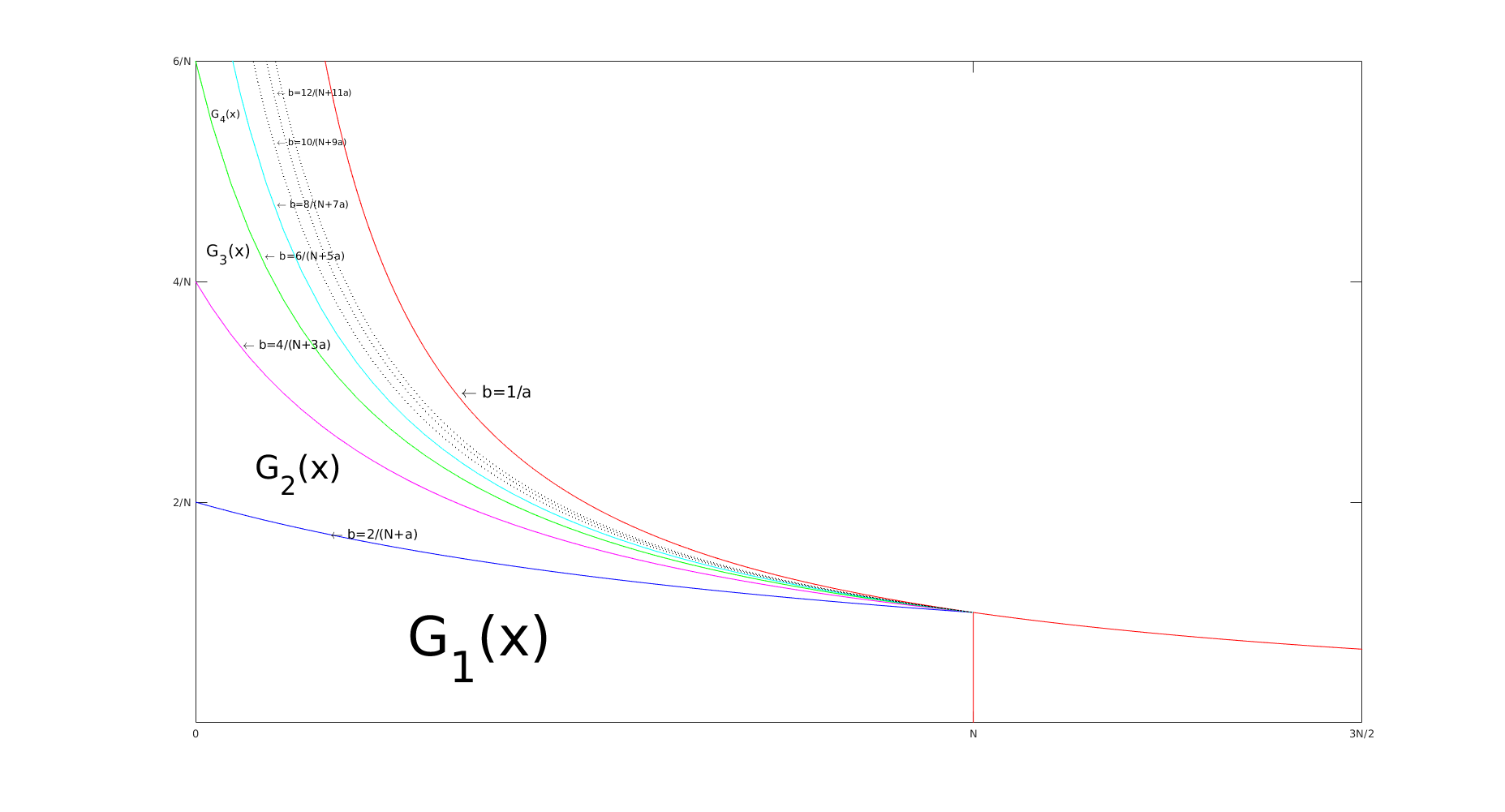}
\caption{Region of interest for the matrix $G_m(x)$ when $m=1,2,3,4, \hdots$ }
 \label{fig:figure2}
\end{figure}

\begin{rem}\label{rem:parityh}
According to Proposition~\ref{prop1}, to prove Theorem~\ref{th:main1} we only need to show, under the assumptions on $g$, that~\eqref{eq:dualred} (or equivalently~\eqref{eq-matrix})  has a unique solution $h$. This is equivalent to proving that the matrix $G_m(x)$ is invertible for $\mbox{a.e.} \ x\in\left[-\frac{a}{2},\frac{a}{2}\right]$. In particular, it is necessary and sufficient to show that $|G_m(x)|\neq 0$ $\mbox{a.e.} \ x\in\left[-\frac{a}{2},\frac{a}{2}\right]$ where $|B|$ denotes the determinant of the square matrix $B$. 
 In addition, since any function 
$g \in V_{N,a}$ is even, it suffices to conduct the  analysis of  the determinant $|G_m(x)|$  on $ [-\frac{a}{2},0]$.

Indeed, for all $x\in [-\frac{a}{2},\frac{a}{2}]$, the symmetry of $g$ implies
\begin{align*}
 |G_m(-x)| &= \mbox{det}\left(g(-x-\frac{\ell}{b}+ka)\right)_{1-m\leq\ell,k\leq m-1}&= \mbox{det}\left(g(x+\frac{\ell}{b}-ka)\right)_{1-m\leq\ell,k\leq m-1}\  \\ 
     &= -\mbox{det}\left(g(x-\frac{\ell}{b}-ka)\right)_{1-m\leq\ell,k\leq m-1}&= \mbox{det}\left(g(x-\frac{\ell}{b}+ka)\right)_{1-m\leq\ell,k\leq m-1}\\
                   &= | G_m(x) |
\end{align*}

In fact, assuming that $|G_m(x)|\neq 0$ on $\left[-\frac{a}{2},\frac{a}{2}\right]$, we can show that the unique solution $h$ to~\eqref{eq:dualred} (or equivalently~\eqref{eq-matrix}) is an even function. Indeed, let $x\in [-a/2,a/2]$ and substitute  $-x\in [-a/2, a/2]$ in~\eqref{eq:dualred}. We have

\begin{align*}
 \sum_{k=1-m}^{m-1}g(-x-\ell/b+ka)\overline{h(-x+ka)}&=  b\delta_{\ell,0}\\
 &= \sum_{k=1-m}^{m-1} g(x+\ell/b-ka)\overline{h(-x+ka)}\\
 &= \sum_{k'=1-m}^{m-1} g(x+\ell/b+k'a)\overline{h(-x-k'a)}\\
 &=\sum_{k'=1-m}^{m-1} g(x-(-\ell)/b+k'a)\overline{\tilde{h}(x+k'a)}\\
 &=b\delta_{-\ell,0}
 \end{align*}
where $\tilde{h}(x)=h(-x)$. Hence, the uniqueness of the solution of ~\eqref{eq:dualred}, implies that for each $x\in [-a/2, a/2]$ and $k=1-m, \hdots, 0, \hdots, m-1$, $$\tilde{h}(x+k'a)=h(-x-ka)=h(x+ka).$$ Consequently, we only need to define the function $h$ on half of the interval $[-\tfrac{2m-1}{2}a, \tfrac{2m-1}{2}a]$.
\end{rem}

The next result specifies some of the entries of the  matrix $G_m(x)$. 

\begin{lemma}\label{lem:diagonal} Given  $N\geq 2$ and $0<a<N$, suppose that  $g \in V_{N,a}$. Assume that $m\geq 3$,  and let $(a,b)\in T_m$.  If $x\in [-\frac{a}{2},0]$, then the following hold.
\begin{enumerate}
 \item[(a)]  $g(x+\frac{k}{b}-ka)>0,$  for all $|k|\leq m-1$.
 \item[(b)] $g(x-\frac{k}{b}+(k-1)a)=0,$  for all $k \in \left\{1, \hdots, m-1\right\}$.
 \item[(c)] If $k\in\left\{1, \hdots, m-1\right\}$, then $g(x-\frac{k}{b}+\ell a)=0,$  for all $\ell \in \left\{1-m,...,k-2\right\}$. 
 \item[(d)] $g(x+\frac{k}{b}+(2-k)a)=0,$  for all $k \in \left\{3-m, \hdots, m-1\right\}$.
 \item[(e)] If  $k \in \left\{3-m, \hdots, m-1\right\},$ then $g(x+\frac{k}{b}+\ell a)=0,$  for all $\ell \in \left\{3-k, \hdots, m-1\right\}$, \mbox{$ \ell\neq m$}.
\end{enumerate}
\end{lemma}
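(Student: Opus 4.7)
The plan is to exploit the rigid support structure of $g\in V_{N,a}$: by $(A1),(A2)$ combined with continuity and compact support, $g(y)>0$ exactly when $|y|<\tfrac{N}{2}$, and $g(y)=0$ exactly when $|y|\geq\tfrac{N}{2}$. Each of (a)--(e) then reduces to locating the argument of $g$ (an affine function of $x\in[-\tfrac{a}{2},0]$ and the integer index) relative to $\pm\tfrac{N}{2}$. The arithmetic ingredients I will pull from $(a,b)\in T_m$ are: (i) $b<\tfrac{2}{N}$, hence $\tfrac{1}{b}>\tfrac{N}{2}>a$ and in particular $\tfrac{1}{b}-a>0$; (ii) $\tfrac{1}{b}\geq\tfrac{N+(2m-1)a}{2m}$ from the upper bound on $b$; (iii) $\tfrac{1}{b}<\tfrac{N+(2m-3)a}{2(m-1)}$ from the lower bound on $b$; and (iv) $a>\tfrac{(m-2)N}{2m-3}$.

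For (a), rewrite $g(x+\tfrac{k}{b}-ka)=g(x+k(\tfrac{1}{b}-a))$. A single application of (iii) gives $\tfrac{1}{b}-a<\tfrac{N-a}{2(m-1)}$, so $(m-1)(\tfrac{1}{b}-a)<\tfrac{N-a}{2}$; combined with $|x|\leq\tfrac{a}{2}$ this yields $|x+k(\tfrac{1}{b}-a)|<\tfrac{N}{2}$ for every $|k|\leq m-1$. For (b), I want $x-\tfrac{k}{b}+(k-1)a\leq-\tfrac{N}{2}$, whose worst case is $x=0$ and reads $\tfrac{k}{b}\geq(k-1)a+\tfrac{N}{2}$. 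I handle $k=1$ separately (it follows at once from (i), since $-\tfrac{1}{b}<-\tfrac{N}{2}$), and for $2\leq k\leq m-1$ I apply (ii); clearing denominators collapses the inequality to $a\geq\tfrac{(m-k)N}{2m-k}$, and a short cross-multiplication check shows $\tfrac{m-k}{2m-k}\leq\tfrac{m-2}{2m-3}$ for $k\geq 2,\,m\geq 3$, so (iv) suffices. Item (c) is then immediate: lowering $\ell$ below $k-1$ only drags the argument further beneath $-\tfrac{N}{2}$.

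For (d), set $z_k(x)=x+k(\tfrac{1}{b}-a)+2a$; by (i) this is strictly increasing in $k$, so its minimum over $x\in[-\tfrac{a}{2},0]$ and the admissible $k$ is attained at $(x,k)=(-\tfrac{a}{2},3-m)$ (or at $(-\tfrac{a}{2},0)$ when $m=3$). For $m=3$ the required bound $\tfrac{3a}{2}\geq\tfrac{N}{2}$ is just (iv) specialized to $a>\tfrac{N}{3}$. For $m\geq 4$, rewriting $z_{3-m}(-\tfrac{a}{2})=(m-\tfrac{3}{2})a-(m-3)\tfrac{1}{b}$ and using (iii) to upper-bound $(m-3)\tfrac{1}{b}$, the inequality $z_{3-m}(-\tfrac{a}{2})\geq\tfrac{N}{2}$ cleans up, after a routine simplification, to precisely (iv). Monotonicity of $z_k(x)$ in both $x$ and $k$ then gives $z_k(x)>\tfrac{N}{2}$ throughout the indicated range, whence $g(z_k(x))=0$. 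Item (e) follows from (d) since raising $\ell$ beyond $3-k$ only moves the argument further above $\tfrac{N}{2}$.

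The main subtlety I expect is the bookkeeping in (b) and (d): identifying the controlling extremum in $x$ and $k$, applying the correct bound on $\tfrac{1}{b}$ ((ii) for (b); (iii) for (a) and (d)), and verifying that the cleaned-up arithmetic condition matches (iv) exactly. The edge cases $k=1$ in (b) and $k=0$ (when $m=3$) in (d) do not fit the generic computation and must be dispatched by (i) and (iv) by hand.
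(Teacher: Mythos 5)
Your proof is correct and follows essentially the same strategy as the paper: reduce each item to locating the affine argument of $g$ relative to $\pm\frac{N}{2}$ via the inequalities defining $T_m$, verify the extremal index by a direct computation (which in (d) does reduce exactly to $a>\frac{(m-2)N}{2m-3}$, as you found), and dispatch the remaining indices by monotonicity in $k$ and $\ell$ using $\frac{1}{b}-a>0$. The only notable divergence is in (b), where the paper avoids your (ii)/(iv) cross-multiplication for $k\geq 2$ entirely by writing $x-\frac{k}{b}+(k-1)a=\bigl(x-\frac{1}{b}\bigr)+(k-1)\bigl(a-\frac{1}{b}\bigr)$ and noting $(k-1)\bigl(a-\frac{1}{b}\bigr)\leq 0$, so everything follows from the $k=1$ case; your route is valid but heavier than necessary.
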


\begin{proof}

\begin{enumerate}
\item[(a)] We first show the result for $k=1-m, $ and $k=m-1$.
For  $x\in [-a/2, 0]$ we see that 
$-\frac{a}{2}+\frac{m-1}{b}+(1-m)a\leq x+\frac{m-1}{b}+(1-m)a\leq\frac{m-1}{b}+(1-m)a$. Next, using the following inequalities, 
\begin{equation}\label{eq:ineq1}
\frac{2(m-1)}{N+(2m-3)a}<b\leq\frac{2m}{N+(2m-1)a}
\end{equation}
we get

\begin{align*}
 x+\frac{m-1}{b}+(1-m)a&\leq\frac{m-1}{b}+(1-m)a\\
                        &<(m-1)\frac{N+(2m-3)a}{2(m-1)}+(1-m)a \\ 
                        &=\frac{N}{2}-\frac{a}{2}<\frac{N}{2}
\end{align*} where we have also used the fact that $a<N/2$. 

\noindent
On the other hand,  using \eqref{eq:ineq1} and 
\begin{equation}\label{eq:ineq2}
\frac{N(m-2)}{2m-3}<a<\frac{N}{2}
\end{equation}
we get 

\begin{align*}
 x+\frac{m-1}{b}+(1-m)a &\geq-\frac{a}{2}+\frac{m-1}{b}+(1-m)a\\
                        &\geq-\frac{a}{2}+(m-1)\frac{N+(2m-1)a}{2m}+(1-m)a \\
                        &=\frac{(m-1)N}{2m}-\frac{(2m-1)}{2m}a\\
                        &> \frac{N}{4m}>-\frac{N}{2}.     
\end{align*} 
Since $g$ is strictly positive on $\left]-\frac{N}{2},\frac{N}{2}\right[$, then $$g(x+\frac{m-1}{b}+(1-m))>0, \forall x\in\left[-\frac{a}{2},0\right]$$

\noindent
A similar argument leads to the fact that 
 $g(x+\frac{1-m}{b}+(m-1))>0$.
 
\noindent
Now, let $ |k|\leq m-2$. Then

\begin{align*}
  x+\frac{k}{b}-ka &= x+\frac{m-1}{b}+(1-m)a+(k-(m-1))\left(\frac{1}{b}-a\right)\\    
                            &\leq      x+\frac{m-1}{b}+(1-m)a < \tfrac{N}{2}       
\end{align*}
since, $k\leq m-2,$ and $x+\frac{m-1}{b}+(1-m)a<\frac{N}{2}$ was established earlier. 

\noindent
Similarly, one shows that $-\frac{N}{2}<  x+\frac{k}{b}-ka.$

\noindent
Thus for all $|k|\leq m-2$, $ -\frac{N}{2}<x+\frac{k}{b}-ka<\frac{N}{2}$, which  concludes the proof.

\item[(b)] 
We start with the case $k=1$ and show that $g\left(x-\frac{1}{b}\right)=0$ for all \mbox{$x\in\left[-\frac{a}{2},0\right]$}. But from the definition of $T_m$ we see that for each $x\in [-a/2, 0]$, then \mbox{$x-1/b<-N/2$}, which gives the result. 

\noindent
Next, for all $k \in \left\{2,...,m-1\right\}$, we have 
$$ x-\frac{k}{b}+(k-1)a = x-\frac{1}{b}+(k-1)\left(a-\frac{1}{b}\right)<-\frac{N}{2}$$
 which follows from the case $k=1$ and the fact that $(k-1)\left(a-\frac{1}{b}\right)\leq0$. The result now follows from the support condition of $g$.

\item[(c)]   This is proved exactly as case (b).  Indeed,  for $k \in \left\{1,...,m-1\right\}$,  let \mbox{$\ell \in \left\{1-m,...,k-2\right\}$}, then 
$$x-\frac{k}{b}+\ell a =x-\frac{k}{b}+(k-1)a+\left(\ell-(k-1)\right)a.$$
But, $\ell-(k-1)\leq 0$ and  $x-\frac{k}{b}+(k-1)a<-\tfrac{N}{2}$ as shown above.

\item[(d)]  Let us prove $g\left(x+\frac{3-m}{b}+(m-1)a\right)=0, $ for all $x\in\left[-\frac{a}{2},0\right]$. 

Let $x\in [-a/2, 0]$. From the definition of $T_m$ we have
\begin{align*}
x+\frac{3-m}{b}+(m-1)a &\geq-\frac{m-3}{b}+\frac{2m-3}{2}a\\
&>-\frac{N+(2m-3)a}{2(m-1)}(m-3)+\frac{(2m-3)a}{2}\\
&=-\frac{(m-3)N}{2(m-1)}+\frac{2(2m-3)a}{2(m-1)}>N/2.
\end{align*}
It follows  that $g\left(x+\frac{3-m}{b}+(m-1)a\right)=0$.

\noindent
Next, for all $k \in \left\{2-m, \hdots, m-1\right\}$, it follows that 
$$ x+\frac{k}{b}+(2-k)a =  x+\frac{3-m}{b}+(m-1)a+(k-(3-m))\left(\frac{1}{b}-a\right)> \frac{N}{2}.$$

\noindent
The results immediately follow from the case $k=3-m$ and the fact that  $k-(3-m)\geq 0$.

\item[(e)]  This follows from (d). Indeed, let  $k\in\left\{m-1,...,3-m\right\}$ and let $\ell \in \left\{3-k,...,m-1\right\}$.
\begin{align*}
 x+\frac{k}{b}+\ell a &= x+\frac{k}{b}+(2-k)a +\left(\ell-(2-k)\right)a\\ 
  &> x+\frac{k}{b}+(2-k)a.
\end{align*}
The result again follows from the fact that $ \ell-(2-k)\geq 0$ and 
\mbox{$ x+\frac{k}{b}+(2-k)a>\frac{N}{2}$} as seen from case (d). 
\end{enumerate}
\end{proof}

\begin{rem}\label{rem:gmblock}
Lemma~\ref{lem:diagonal} allows us to write  $G_m(x)$ as a block matrix: for $x\in [-a/2, 0]$,
$$G_m(x)=\begin{bmatrix}A_m(x) & B_{m}(x)\\ 0 & C_m(x)\end{bmatrix}$$ where $B_m(x)$ is an $m\times (m-1)$ matrix, $C_m(x)$ is an $(m-1)\times (m-1)$ matrix, and $0$ is a $(m-1)\times m$ zero matrix. 
In fact, $A_m(x)$ can be viewed as  the $m\times m$ submatrix of $G_m(x)$ obtained from deleting the bottom $m-1$ rows and the rightmost  $m-1$ columns of $G_m(x)$. In particular, $A_{m}(x)$ is given by 
\begin{eqnarray*}
A_m(x)&=& \left[g(x-\frac{\ell}{b}+ka)\right]_{1-m\leq\ell,k\leq 0}
\end{eqnarray*}

$$\begin{pmatrix}
 g(x+\frac{m-1}{b}+(1-m)a) & g(x+\frac{m-1}{b}+(2-m)a) & \hdots &\hdots  \\
 g(x+\frac{m-2}{b}+(1-m)a) & g(x+\frac{m-2}{b}+(2-m)a) & \hdots \\
 \cdot & g(x+\frac{m-3}{b}+(2-m)a) & \hdots \\
 \cdot &\hdots & g(x+\frac{3}{b}-2a) & \hdots \\
 \cdot  & \hdots  & g(x+\frac{2}{b}-2a) & g(x+\frac{2}{b}-a) & \cdot\\
\cdot  & \hdots &  g(x+\frac{1}{b}-2a) & g(x+\frac{1}{b}-a) & g(x+\frac{1}{b})\\
\ 0  & 0& \hdots& g(x-a) & g(x)\end{pmatrix}$$

In addition, $A_m(x)$ is a tridiagonal matrix, which can be written as  

\begin{eqnarray*}
\hspace{-0.3cm}
\scriptsize{
\begin{pmatrix}
 g(x+\frac{m-1}{b}+(1-m)a) 	& g(x+\frac{m-1}{b}+(2-m)a) 	& 0 							&\hdots 				 	&\hdots				& 0 \\
 g(x+\frac{m-2}{b}+(1-m)a) 	& g(x+\frac{m-2}{b}+(2-m)a) 	& g(x+\frac{m-2}{b}+(3-m)a)  & \hdots 			&\hdots 			& 0 \\
 0 							& g(x+\frac{m-3}{b}+(2-m)a) 	& g(x+\frac{m-3}{b}+(3-m)a) 	& 0				&\hdots 				& 0 \\
 0 							& 0 							& g(x+\frac{m-4}{b}+(3-m)a) 	& g(x+\frac{3}{b}-2a) 		& 0		& 0 \\
 0 							& 0 							& 0  				& g(x+\frac{2}{b}-2a) 	& g(x+\frac{2}{b}-a)	& 0 \\
 0 							& 0 							& \hdots 	& g(x+\frac{1}{b}-2a) 	& g(x+\frac{1}{b}-a)	& g(x+\frac{1}{b})\\
 0 							& 0 							& 0 					& 0  					& g(x-a) 			& g(x)
 \end{pmatrix}}
\end{eqnarray*}

Furthermore, all the entries of $B_m(x)$ are $0$ except its $(m, 1)$ entry which is $g(x+a)$, and $C_m(x)$ is given by 

\begin{eqnarray*}
\scriptsize{ 
\begin{pmatrix}g(x-1/b+a) & g(x-1/b+2a)& 0& \hdots \\
0& g(x-2/b+2a)& g(x-2/b+3a)& 0& \hdots \\
\vdots & \vdots & \vdots & \vdots & \vdots  \\
0& 0& \hdots & g(x-(m-2)/b+(m-2)a) &g(x-(m-2)/b+(m-1)a)\\
0& 0& \hdots &0& g(x-(m-1)/b+(m-1)a)\end{pmatrix}
}
\end{eqnarray*}

\end{rem}

The following trivial  inequalities can be derived  from  the definition of $T_m$ and  will be used to analyze the entries of the matrix $A_m(x)$.

\begin{lemma}\label{lem:inequalities} Given  $N\geq 2$ and $0<a<N$, suppose that  $m\geq 2$. If $(a,b)\in T_m$ and  $k\in \left\{1,...,m-1\right\}$, then the following hold.
 \begin{enumerate}
  \item[(a)]  If $x\in [-\frac{a}{2}, 0]$, then $-\frac{N}{2}<x+\frac{k}{b}-ka<\frac{N}{2}$.
  \item[(b)]  If $x \in [-\frac{a}{2}, \frac{N}{2}-\frac{k}{b}+(k-1)a)$, then $ 0<x+\frac{k}{b}-(k-1)a<\frac{N}{2}$; and if   $ x\in [\frac{N}{2}-\frac{k}{b}+(k-1)a, 0],$ then $ 
  \frac{N}{2}\leq x+\frac{k}{b}-(k-1)a<\frac{N}{2}+\frac{Nk}{4(m-1)}$.
  \item[(c)]  If $ x\in [-\frac{a}{2}, ka+\frac{1-k}{b}-\frac{N}{2}],$ then  $-\frac{N}{2}-\frac{(m-k+1)N}{4m}<x+\frac{k-1}{b}-ka\leq-\frac{N}{2}$;  and if $ x\in (ka+\frac{1-k}{b}-\frac{N}{2}, 0],$ then  
  $-\frac{N}{2}\leq x+\frac{k-1}{b}-ka<0$.  
  \item[(d)]  $\frac{N}{2}-\frac{k+1}{b}+ka<\frac{N}{2}-\frac{k}{b}+(k-1)a$, and $(1+k)a-\frac{k}{b}-\frac{N}{2}<ka+\frac{1-k}{b}-\frac{N}{2}$
\end{enumerate}
\end{lemma}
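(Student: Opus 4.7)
My plan is to verify each of the four claims by direct algebraic manipulation, substituting the sharp bounds on $a$ and $1/b$ coming from the definition of $T_m$. Specifically, from $(a,b)\in T_m$ I extract the working inequalities $N(m-2)/(2m-3)<a<N/2$ and $(N+(2m-1)a)/(2m)\leq 1/b<(N+(2m-3)a)/(2(m-1))$, together with the immediate consequence $ab<1$ (which follows from $a<N/2$ and $b<2/N$). With $x\in[-a/2,0]$, everything else is bookkeeping.

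I would dispatch claim~(d) first: each of its two stated inequalities reduces after cancellation to $1/b-a>0$, which is just $ab<1$. Claim~(a) is essentially the argument already carried out in Lemma~\ref{lem:diagonal}(a), restricted to $k\in\{1,\dots,m-1\}$: the upper bound $x+k/b-ka<N/2$ uses $x\leq 0$ together with the right endpoint of the $1/b$ range and $a<N/2$, while the lower bound $>-N/2$ uses $x\geq -a/2$ together with the left endpoint; the same calculation goes through uniformly for all $m\geq 2$ (in particular for $m=2$, which is not formally covered by Lemma~\ref{lem:diagonal} as stated).

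Claims~(b) and~(c) are the longer cases, but their structure is transparent. In~(b) the cutoff $x=N/2-k/b+(k-1)a$ is chosen precisely so that $x+k/b-(k-1)a=N/2$, which gives the upper bound in the first sub-case and the lower bound in the second by construction. The two remaining inequalities are the lower bound $>0$ in the first sub-case, which after clearing denominators collapses to $kN+a(m-k)>0$, and the upper bound $<N/2+Nk/(4(m-1))$ in the second sub-case, which collapses to $(2(m-1)-k)(N-2a)>0$; both are evident since $k\leq m-1$ and $a<N/2$. Claim~(c) is strictly parallel: the cutoff $x=ka+(1-k)/b-N/2$ makes $x+(k-1)/b-ka=-N/2$, yielding the easy halves of both sub-cases, and the nontrivial halves reduce to $(3m+k-1)(N-2a)>0$ and to $(k-1)/b<ka$. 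The latter is trivial when $k=1$, and for $k\geq 2$ it is equivalent to $a(k+2m-3)>(k-1)N$, which follows from $a>N(m-2)/(2m-3)$ via the elementary identity $(m-2)(k+2m-3)-(k-1)(2m-3)=(m-1)(2m-3-k)\geq 0$.

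There is no serious obstacle: the lemma is a sequence of elementary reductions, and the only care required is to keep track of which endpoint of the $1/b$ interval is in play at each step. The value of the lemma is not in the difficulty of its proof but in that it packages exactly the interval splittings needed later to read off the sign and support information for the entries of $A_m(x)$.
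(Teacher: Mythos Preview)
Your proposal is correct and matches the paper's approach: the paper states that these are ``trivial inequalities [that] can be derived from the definition of $T_m$'' and gives no further argument, which is exactly what your direct verification using the endpoint bounds on $a$ and $1/b$ (together with $ab<1$) accomplishes. The reductions you identify---$(2(m-1)-k)(N-2a)>0$, $(3m+k-1)(N-2a)>0$, and the identity $(m-2)(k+2m-3)-(k-1)(2m-3)=(m-1)(2m-3-k)$---are all correct and suffice.
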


Let $k\in \left\{1,...,m-1\right\}$ and consider the $2\times2$ submatrix $A_{k,k-1}(x)$ of $A_m(x)$ defined by
$$
A_{k,k-1}(x)= \begin{pmatrix} 
 g(x+\frac{k}{b}-ka) & g(x+\frac{k}{b}-(k-1)a)\\ 
 g(x+\frac{k-1}{b}-ka) & g(x+\frac{k-1}{b}-(k-1)a)
\end{pmatrix}$$

The following lemma shows that the matrix $A_{k,k-1}(x)$ is invertible for all \mbox{$k\in \left\{1,...,m-1\right\}$}.

\begin{lemma}\label{lem:submatrices}
Given  $N\geq 2$ and $0<a<N$, suppose that  $g \in V_{N,a}$. Assume that $m\geq 2$,  and let $(a,b)\in T_m$.  If $x\in [-\frac{a}{2},0]$, then for all $k\in \left\{1,...,m-1\right\}$
 \begin{equation}
  \left|A_{k,k-1}(x)\right|>0.
 \end{equation}
\end{lemma}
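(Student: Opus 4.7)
Expanding the $2\times 2$ determinant, $|A_{k,k-1}(x)| = g(y_1)g(y_4) - g(y_2)g(y_3)$ where I set $y_1 = x + k/b - ka$, $y_2 = y_1 + a$, $y_3 = y_1 - 1/b$, and $y_4 = y_1 - 1/b + a$. Since any admissible Gabor frame satisfies $ab < 1$, one has $1/b - a > 0$, so the four points are ordered $y_3 < y_4 < y_1 < y_2$ with $y_2 - y_1 = y_4 - y_3 = a$ and $y_1 + y_4 = y_2 + y_3$.

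My plan is to derive the determinant bound from two entry-wise comparisons, $g(y_1) > g(y_2)$ and $g(y_4) > g(y_3)$, combined with the positivity $g(y_1), g(y_4) > 0$. The positivity is immediate from Lemma~\ref{lem:diagonal}(a) applied with indices $k$ and $k-1$ (when $k=1$ one has $y_4 = x \in (-N/2, N/2)$, giving $g(y_4) > 0$ directly from the positivity of $g \in V_{N,a}$ on its support). Chaining the two strict comparisons through the positive quantity $g(y_4)$ then yields
\begin{equation*}
g(y_1)g(y_4) \;>\; g(y_2)g(y_4) \;\geq\; g(y_2)g(y_3),
\end{equation*}
which is the claim.

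The two strict comparisons reduce, via the symmetry (A1) and strict monotonicity (A2) (so that $g$ is strictly decreasing as a function of $|\cdot|$ on $[0, N/2]$ and vanishes outside $[-N/2, N/2]$), to verifying $|y_1| < |y_2|$ and $|y_4| < |y_3|$. A short case analysis on the sign of $y_1$ (resp.\ $y_4$) shows that these follow from the two threshold bounds $y_2 > a/2$ and $y_3 < -a/2$: for instance, if $y_1 \geq 0$ then $|y_1| = y_2 - a < y_2 = |y_2|$, while if $y_1 < 0$ then $|y_1| = a - y_2 < a/2 < |y_2|$, and similarly for the pair $(y_3, y_4)$. The first threshold is immediate, since $y_2 \geq -a/2 + k(1/b-a) + a > a/2$ using $x \geq -a/2$, $k \geq 1$, and $1/b > a$. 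The second is equivalent to $b > 2(k-1)/[(2k-1)a]$; I will verify that the lower bound $b > 2(m-1)/[N+(2m-3)a]$ built into $T_m$ implies this for every $k \in \{1, \ldots, m-1\}$. After cross-multiplying and using the identity $(m-1)(2k-1) - (k-1)(2m-3) = m+k-2$, the required implication reduces to $a(m+k-2) > (k-1)N$; the worst case $k = m-1$ is exactly $a > N(m-2)/(2m-3)$, which is the extremal constraint on $a$ in $T_m$.

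The main obstacle I anticipate is the uniform verification of $y_3 < -a/2$ over all admissible $k$, since the extremal case $k = m-1$ must match exactly the lower bound on $a$ in the definition of $T_m$; it is encouraging that this match is algebraically tight, which suggests the partition $T_m$ is calibrated precisely for this estimate. Once this threshold is in hand, the sub-case analysis on the signs of $y_1$ and $y_4$, together with the symmetry-monotonicity passage through (A1)-(A2), is routine and needs no use of the convexity condition (A3).
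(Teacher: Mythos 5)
Your proof is correct and follows essentially the same route as the paper's: both establish the two entrywise comparisons $g(y_1)>g(y_2)$ and $g(y_4)>g(y_3)$ by comparing absolute values through the symmetry and strict monotonicity of $g$ (a case analysis on the signs of $y_1$ and $y_4$), and then multiply the inequalities using the positivity of the diagonal entries. The only differences are organizational: you package the sign analysis into the thresholds $y_2>\frac{a}{2}$ and $y_3<-\frac{a}{2}$, and you explicitly verify the key inequality $b>\frac{2(k-1)}{(2k-1)a}$ from the definition of $T_m$ (including the tight extremal case $k=m-1$), a step the paper uses implicitly when it asserts $-2x-\frac{2(k-1)}{b}+(2k-1)a>0$ without further justification.
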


\begin{proof}

 First, we prove that $g(x+\frac{k}{b}-ka)> g(x+\frac{k}{b}-(k-1)a)$.
 From (a) and (b) of Lemma~\ref{lem:inequalities} we know that $-\frac{N}{2}<x+\frac{k}{b}-ka<\frac{N}{2}$ and $x+\frac{k}{b}-(k-1)a>0$, therefore we have two different cases.
 
 $\bullet$ If $x+\frac{k}{b}-ka>0$, then the monotonicity of $g$ on $[0, \frac{N}{2}]$  implies that for 
 
\mbox{$0<x+\frac{k}{b}-ka<x+\frac{k}{b}-(k-1)a,$} we have  $g(x+\frac{k}{b}-ka)> g(x+\frac{k}{b}-(k-1)a).$

$\bullet$ If $x+\frac{k}{b}-ka\leq0$, we have $-x-\frac{k}{b}+ka\geq0$ and 
$$-x-\frac{k}{b}+ka-(x+\frac{k}{b}-(k-1)a)=-2x-\frac{2k}{b}+2ka-a<0.$$ 
Consequently, $g(-x-\frac{k}{b}+ka)>g(x+\frac{k}{b}-(k-1)a)$, and by the symmetry of $g$ we get $g(x+\frac{k}{b}-ka)>g(x+\frac{k}{b}-(k-1)a)$. 
 
Next,  we show that $g(x+\frac{k-1}{b}-ka)<g(x+\frac{k-1}{b}-(k-1)a)$.
From (a) and (c) of Lemma~\ref{lem:inequalities}, we know  also that $-\frac{N}{2}<x+\frac{k-1}{b}-(k-1)a<\frac{N}{2}$ and $x+\frac{k-1}{b}-ka<0$, therefore we can consider the following two cases.

$\bullet$ If $x+\frac{k-1}{b}-(k-1)a\leq0$, we have 
$$
x+\frac{k-1}{b}-ka<x+\frac{k-1}{b}-(k-1)a
$$
then 
$$
g(x+\frac{k-1}{b}-ka)<g(x+\frac{k-1}{b}-(k-1)a)
$$ 
because $g$ is strictly increasing on $ [-\frac{N}{2}, 0]$.

$\bullet$ If $x+\frac{k-1}{b}-(k-1)a>0$, we have 
$$
-x-\frac{k-1}{b}+ka-(x+\frac{k-1}{b}-(k-1)a)=-2x-\frac{2(k-1)}{b}+(2k-1)a>0.
$$  Using this as well as the monotonicity and the symmetry of $g$ we see  that 

$$
g(-x-\frac{k-1}{b}+ka)<g(x+\frac{k-1}{b}-(k-1)a),
$$ 
or, equivalently, 
$$
g(x+\frac{k-1}{b}-ka)<g(x+\frac{k-1}{b}-(k-1)a).
$$ 
All together, we have proved that
$$ \left\{
\begin{array}{cc}
g(x+\frac{k-1}{b}-ka)<g(x+\frac{k-1}{b}-(k-1)a)\\
g(x+\frac{k}{b}-ka)> g(x+\frac{k}{b}-(k-1)a),
\end{array}
\right.$$ 
which gives 
$$
g(x+\frac{k}{b}-ka)g(x+\frac{k-1}{b}-(k-1)a)>g(x+\frac{k-1}{b}-ka)g(x+\frac{k}{b}-(k-1)a).
$$
Consequently, for each $x\in [-a/2, 0]$, $|A_{k,k-1}(x)|>0,$ for all $k\in \left\{1,...,m-1\right\}.$

\end{proof}

For the next result, we recall that from the definition of $T_m$, it is easy to see that for $(a,b)\in T_m$, 

\begin{equation*}
 -\frac{(m+1)N}{2m(2m-3)} < a-N+\frac{1}{b} < \frac{N}{4(m-1)}
\end{equation*}

\begin{lemma}\label{lem:3cases}
Given  $N\geq 2$ and $0<a<N$, let  $m\geq 2$. If  $(a,b)\in T_m$ and  \mbox{$k\in \left\{1,...,m-1\right\}$}, then the following statements hold.

\begin{enumerate}
\item[(a)]  If $\frac{N}{2}-\frac{1}{b}=a-\frac{N}{2}$, then 
    
    $$
    \frac{N}{2}-\frac{k}{b}+(k-1)a=ka+\frac{1-k}{b}-\frac{N}{2}<\frac{N}{2}-\frac{k-1}{b}+(k-2)a.
  $$
 \item[(b)] If $\frac{N}{2}-\frac{1}{b}< a-\frac{N}{2}$, then 
   $$ \frac{N}{2}-\frac{k}{b}+(k-1)a< ka+\frac{1-k}{b}-\frac{N}{2}<\frac{N}{2}-\frac{k-1}{b}+(k-2)a.$$
  
\item[(c)]   If $a-\frac{N}{2}<\frac{N}{2}-\frac{1}{b}$, then 
 $$  ka+\frac{1-k}{b}-\frac{N}{2}<\frac{N}{2}-\frac{k}{b}+(k-1)a<(k-1)a+\frac{2-k}{b}-\frac{N}{2}.$$
  \end{enumerate}
\end{lemma}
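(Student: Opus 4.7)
The plan is to reduce each of the three comparisons to a single algebraic identity whose sign is controlled either by the hypothesis of the corresponding case or by the defining inequalities of $T_m$. I would introduce the shorthand
\begin{align*}
P_k &:= \tfrac{N}{2}-\tfrac{k}{b}+(k-1)a, & P_{k-1} &:= \tfrac{N}{2}-\tfrac{k-1}{b}+(k-2)a,\\
Q_k &:= ka+\tfrac{1-k}{b}-\tfrac{N}{2}, & R_k &:= (k-1)a+\tfrac{2-k}{b}-\tfrac{N}{2},
\end{align*}
so that every term appearing in (a)--(c) is one of these four expressions.

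A direct cancellation will then establish the three $k$-independent identities
\begin{align*}
P_k - Q_k &= N - \tfrac{1}{b} - a = \bigl(\tfrac{N}{2}-\tfrac{1}{b}\bigr) - \bigl(a-\tfrac{N}{2}\bigr),\\
P_{k-1} - Q_k &= N - 2a,\\
R_k - P_k &= \tfrac{2}{b} - N.
\end{align*}
The first identity shows that the sign of $P_k-Q_k$ is exactly the sign of $\bigl(\tfrac{N}{2}-\tfrac{1}{b}\bigr)-\bigl(a-\tfrac{N}{2}\bigr)$, which is precisely the trichotomy indexing (a), (b), (c). The second identity is strictly positive because $(a,b)\in T_m$ forces $a<N/2$, and the third is strictly positive because the defining condition $b<2/N$ of $T_m$ is exactly $\tfrac{2}{b}-N>0$.

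With these three identities in hand, the three cases fall out at once. Case (a) becomes $P_k=Q_k$ (from the first identity under the equality hypothesis) together with $Q_k<P_{k-1}$ (from $N-2a>0$). Case (b) reads $P_k<Q_k<P_{k-1}$, where the left inequality is the first identity under $\tfrac{N}{2}-\tfrac{1}{b}<a-\tfrac{N}{2}$ and the right inequality is the second identity. Case (c) reads $Q_k<P_k<R_k$, using the first identity under the opposite hypothesis and the third identity for $P_k<R_k$. The only real hazard is the cancellation bookkeeping in verifying the three identities and matching each of $P_k,Q_k,P_{k-1},R_k$ to the correct side of the correct inequality in the lemma; I expect this to be the main source of careless errors but not a conceptual obstacle, since nothing beyond the two universal bounds $a<N/2$ and $b<2/N$ of $T_m$ is used.
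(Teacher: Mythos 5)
Your proposal is correct and is essentially the paper's own argument: the paper likewise reduces everything to the $k$-independent differences $P_k-Q_k=\bigl(\tfrac{N}{2}-\tfrac{1}{b}\bigr)-\bigl(a-\tfrac{N}{2}\bigr)$, $Q_k-P_{k-1}=2\bigl(a-\tfrac{N}{2}\bigr)<0$, and $P_k-R_k=2\bigl(\tfrac{N}{2}-\tfrac{1}{b}\bigr)<0$, with the signs coming from $a<N/2$, $b<2/N$, and the case hypothesis. No further comment is needed.
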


\begin{proof}  The result easily follows from the fact that  for all $k \in \left\{1,...,m-1\right\}$, we have 
$$\frac{N}{2}-\frac{k}{b}+(k-1)a=k(a-\frac{1}{b})+(\frac{N}{2}-a),$$  $$ka+\frac{1-k}{b}-\frac{N}{2}=k(a-\frac{1}{b})+(\frac{1}{b}-\frac{N}{2}),$$

$$\left(ka+\frac{1-k}{b}-\frac{N}{2}\right)-\left(\frac{N}{2}-\frac{k-1}{b}+(k-2)a\right)=2(a-\frac{N}{2})<0,$$
and
$$\left(\frac{N}{2}-\frac{k}{b}+(k-1)a\right)-\left((k-1)a+\frac{2-k}{b}-\frac{N}{2}\right)=2(\frac{N}{2}-\frac{1}{b})<0.$$ 
\end{proof}

We can now give an explicit expression for the determinant of $A_m(x)$ when $m\geq3$,  $x\in [-\frac{a}{2}, 0]$, and under the hypotheses of Theorem \ref{th:main1}.  The different cases considered in proving this result are illustrated for the cases $m=2$ and $m=3$ in Figure~\ref{fig:figure3}.

\begin{lemma}\label{lem:subdeter} Given  $N\geq 2$ and $0<a<N$, suppose that  $g \in V_{N,a}$. Assume that $m\geq3$, and  let $(a,b)\in T_m$. Then the following statements hold.
  \begin{enumerate}
 \item[(a)] If $\frac{N}{2}-\frac{1}{b}\leq a-\frac{N}{2}$, then for all $x\in [-a/2, 0]$
\begin{equation*}
 |A_m(x)|=\prod_{k=0}^{m-1}{g(x+\frac{k}{b}-ka)}.
\end{equation*}

\item[(b)]  If $a-\frac{N}{2}<\frac{N}{2}-\frac{1}{b}$, then for each $x\in S_m$, 
\begin{equation*}
 |A_m(x)|=\prod_{k=0}^{m-1}{g(x+\frac{k}{b}-ka)},
\end{equation*} where

 \begin{align*}
 S_m&=\left[-\frac{a}{2}, (m-1)a+\frac{2-m}{b}-\frac{N}{2}\right] \cup \left[\frac{N}{2}-\frac{1}{b}, 0\right]\\
 & \bigcup_{k=1-m}^{-2} \left[\frac{N}{2}+\frac{k}{b}-(k+1)a, -(k+1)a+\frac{k+2}{b}-\frac{N}{2}\right]. 
 \end{align*}
 
In addition, for  each 
$$
x\in Z_m=\displaystyle{\left[-\frac{a}{2} , 0\right]}\setminus S_m=\bigcup_{\ell=1-m}^{-1}{\bigg(-\ell a+\frac{\ell+1}{b}-\frac{N}{2}, \frac{N}{2}+\frac{\ell}{b}-(\ell+1)a\bigg)},
$$ 
let $\ell \in \{1-m, \hdots, -1\}$ be the unique integer such that 
$$
x\in \bigg(-\ell a+\frac{\ell+1}{b}-\frac{N}{2}, \frac{N}{2}+\frac{\ell}{b}-(\ell+1)a\bigg)
$$
then 
\begin{equation*}
 |A_m(x)|= |A_{-\ell, -\ell-1}(x)| \times \prod_{  \begin{array}{c}\scriptstyle k=0 \\ \scriptstyle k\neq -\ell, -\ell-1\end{array}}^{m-1} g(x+\frac{k}{b}-ka).
\end{equation*} 
\end{enumerate}
\end{lemma}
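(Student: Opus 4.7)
My approach is to exploit the tridiagonal structure of $A_m(x)$ recorded in Remark~\ref{rem:gmblock} together with the three-term recurrence for determinants of tridiagonal matrices. Write the diagonal entries as $d_k(x):=g(x+\tfrac{k}{b}-ka)$ for $k\in\{0,\ldots,m-1\}$, the superdiagonal entry immediately to the right of $d_k$ as $u_k(x):=g(x+\tfrac{k}{b}-(k-1)a)$, and the subdiagonal entry immediately below $d_k$ as $l_{k-1}(x):=g(x+\tfrac{k-1}{b}-ka)$; with this notation $|A_{k,k-1}(x)|=d_k d_{k-1}-u_k l_{k-1}$ in the notation of Lemma~\ref{lem:submatrices}. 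Letting $A_m^{(j)}(x)$ denote the bottom-right $j\times j$ principal submatrix of $A_m(x)$, cofactor expansion along its top row yields
\begin{equation*}
\bigl|A_m^{(j)}(x)\bigr|=d_{j-1}(x)\,\bigl|A_m^{(j-1)}(x)\bigr|-u_{j-1}(x)\,l_{j-2}(x)\,\bigl|A_m^{(j-2)}(x)\bigr|,
\end{equation*}
so the whole argument will reduce to identifying, for each $x\in[-\tfrac{a}{2},0]$, the set of indices $k$ for which the cross product $u_k(x)l_{k-1}(x)$ fails to vanish.

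Introducing the thresholds $\alpha_k:=\tfrac{N}{2}-\tfrac{k}{b}+(k-1)a$ and $\beta_k:=ka+\tfrac{1-k}{b}-\tfrac{N}{2}$, the support condition on $g$ combined with Lemma~\ref{lem:inequalities}(b),(c) gives $u_k(x)=0$ whenever $x\ge\alpha_k$ and $l_{k-1}(x)=0$ whenever $x\le\beta_k$. In Case~(a), the hypothesis $\tfrac{N}{2}-\tfrac{1}{b}\le a-\tfrac{N}{2}$ and parts~(a),(b) of Lemma~\ref{lem:3cases} deliver $\alpha_k\le\beta_k$ for every $k\in\{1,\ldots,m-1\}$, so for each $x\in[-\tfrac{a}{2},0]$ and each such $k$ at least one of $u_k(x),l_{k-1}(x)$ vanishes; the recurrence then telescopes to $|A_m(x)|=\prod_{k=0}^{m-1}d_k(x)$. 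In Case~(b), Lemma~\ref{lem:3cases}(c) supplies the strict interlacing $\beta_k<\alpha_k<\beta_{k-1}$, making the ``bad'' open intervals $B_k:=(\beta_k,\alpha_k)$ pairwise disjoint; matching endpoints under the reparametrization $k_0=-\ell$ identifies $\bigcup_{k_0=1}^{m-1}B_{k_0}$ with $Z_m$ and its complement in $[-\tfrac{a}{2},0]$ with $S_m$. For $x\in S_m$ no $B_k$ contains $x$, so the Case~(a) argument applies verbatim. For $x\in Z_m$ there is a unique $k_0$ with $x\in B_{k_0}$, while the interlacing forces $x\notin B_k$ for every $k\ne k_0$; the recurrence thus telescopes at every step $j\ne k_0+1$, and at the single step $j=k_0+1$ it produces the correction $-u_{k_0}(x)l_{k_0-1}(x)\prod_{k=0}^{k_0-2}d_k(x)$, which combines with the diagonal step to give the factor $d_{k_0}d_{k_0-1}-u_{k_0}l_{k_0-1}=|A_{k_0,k_0-1}(x)|$. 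Multiplying by the remaining diagonals $\prod_{k\neq k_0,k_0-1}d_k(x)$ then yields the claimed formula.

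The main technical obstacle is the bookkeeping: matching the statement's indexing $\ell\in\{1-m,\ldots,-1\}$ and the explicit union describing $S_m$ with the $(\alpha_k,\beta_k)$ threshold picture above, and handling the possibility that an extremal bad interval (say $B_{m-1}$) protrudes below $-\tfrac{a}{2}$, in which case the corresponding ``left-end'' piece of $S_m$ degenerates and contributes nothing new. All of the analytic input is already packaged in Lemmas~\ref{lem:inequalities} and~\ref{lem:3cases}, so once the dictionary between the two notations is fixed the determinant identities of both parts follow from a single induction on $j$ through the tridiagonal recurrence.
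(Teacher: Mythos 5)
Your proposal is correct and follows essentially the same route as the paper: the three-term Laplace recurrence for the tridiagonal determinant, combined with the threshold orderings of Lemma~\ref{lem:3cases} to decide when the off-diagonal product $u_k l_{k-1}$ vanishes, so that the recurrence either telescopes to the diagonal product or picks up a single $2\times 2$ block $|A_{-\ell,-\ell-1}(x)|$. The paper phrases this as an induction on $m$ (really an induction on the trailing principal minor, as in your version), and your explicit remark about the leftmost bad interval possibly protruding below $-\tfrac{a}{2}$ is a point the paper's stated chain of inequalities glosses over but which, as you note, is harmless.
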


\begin{proof}
 We prove the result by induction on $m$. 
 
 We recall that the cases $m=1$ and $m=2$ have already been settled. Indeed, $|A_1(x)|=g(x)>0$ and $|A_2(x)|=|A_{1,0}(x)|>0$. 

 For $m=3$, the matrix $A_3(x)$ is given by 
$$
A_3(x)=\begin{pmatrix}
  g(x+\frac{2}{b}-2a) & g(x+\frac{2}{b}-a) & 0\\
  g(x+\frac{1}{b}-2a) & g(x+\frac{1}{b}-a) & g(x+\frac{1}{b})\\
  0 & g(x-a) & g(x)\end{pmatrix} $$
Let $x \in [-\frac{a}{2}, 0].$ From Lemma \ref{lem:inequalities}, we know that $g(x+\frac{2}{b}-2a)>0$. Furthermore, 
\begin{eqnarray*}
 g(x+\frac{2}{b}-a)=
 \begin{dcases}
  g(x+\frac{2}{b}-a) >0	& \mbox{if}\ x \in \left[-\frac{a}{2}, \frac{N}{2}-\frac{2}{b}+a\right)\\
\qquad \qquad  0 		& \mbox{if}\ x \in \left[\frac{N}{2}-\frac{2}{b}+a, 0\right]
\end{dcases}
\end{eqnarray*}
and
\begin{eqnarray*}
g(x+\frac{1}{b}-2a)=
\begin{dcases}
\qquad  \qquad   0 			& \mbox{if}\ x \in \left[-\frac{a}{2}, 2a-\frac{1}{b}-\frac{N}{2} \right]\\
   g(x+\frac{1}{b}-2a)>0		& \mbox{if}\ x \in \left(2a-\frac{1}{b}-\frac{N}{2}, 0 \right] 
\end{dcases}
\end{eqnarray*}

\begin{enumerate}
\item[(a)]  If $\frac{N}{2}-\frac{1}{b}\leq a-\frac{N}{2}$, then $-\frac{a}{2}<\frac{N}{2}-\frac{2}{b}+a\leq 2a-\frac{1}{b}-\frac{N}{2}<\frac{N}{2}-\frac{1}{b}\leq a-\frac{N}{2}<0$. We can consider the following  cases.

\begin{enumerate}
\item[(a-1)] If $x\in \displaystyle{\left[-\frac{a}{2}, 2a-\frac{1}{b}-\frac{N}{2}\right]}$, then $g(x-a)=g(x+1/b-2a)=0$. $A_3(x)$ is  an upper triangular matrix and its determinant is the product of the diagonal entries.
\item[(a-2)]
If $x\in \displaystyle{ \left[2a-\frac{1}{b}- \frac{N}{2}, \frac{N}{2}-\frac{1}{b}\right]}$, then $g(x-a)=g(x+2/b-a)=0$. Computing the determinant of $A_3(x)$ along the first row gives the result. 
\item[(a-3)]
If $x\in \displaystyle{ \left[\frac{N}{2}- \frac{1}{b}, 0 \right]}$, then $g(x+2/b-a)=g(x+1/b)=0$. $A_3(x)$ is thus a lower triangular matrix and its determinant is the product of the diagonal entries.
\end{enumerate}

\noindent
This establishes part (a) for the base case $m=3$. Suppose that (a) holds for $m\geq 3$ and let us prove that it holds for $m+1$. Using the Laplace expansion  by minors along the first row, we have
\begin{equation*}
 \left|A_{m+1}(x)\right|=g(x+\frac{m}{b}-ma)\left|A_m(x)\right|-g(x+\frac{m}{b}+(1-m)a)g(x+\frac{m-1}{b}-ma)\left|A_{m-1}(x)\right|.
\end{equation*}

\noindent
From Lemma \ref{lem:inequalities}, we know that for all $x \in [-\frac{a}{2}, 0]$ we have $g(x+\frac{m}{b}-ma)>0.$  Furthermore, 
\begin{eqnarray*}
g(x+\frac{m}{b}+(1-m)a)=
\begin{dcases}
g(x+\frac{m}{b}+(1-m)a) > 0 	& \mbox{if} \ x  \in \left[-\frac{a}{2}, \frac{N}{2}-\frac{m}{b}+(m-1)a\right)\\
\qquad \qquad  0  			& \mbox{if} \ x  \in \left[\frac{N}{2}-\frac{m}{b}+(m-1)a, 0\right]
\end{dcases}
\end{eqnarray*}
and
\begin{eqnarray*}
  g(x+\frac{m-1}{b}-ma)=
\begin{dcases}
 \qquad \qquad  0 			&	\mbox{if}\ x \in \left[-\frac{a}{2}, ma-\frac{m-1}{b}-\frac{N}{2}\right]\\
g(x+\frac{1}{b}-2a)>0		& 	\mbox{if}\ x \in \left(ma-\frac{m-1}{b}-\frac{N}{2}, 0\right] 
\end{dcases}
\end{eqnarray*}
Using  Lemma \ref{lem:3cases} and the definition of $T_m$, we have 

 \begin{align*}
 -\frac{a}{2}&<\frac{N}{2}-\frac{m}{b}+(m-1)a\leq ma-\frac{m-1}{b}-\frac{N}{2}<\\
 &< \tfrac{N}{2}-\tfrac{m-1}{b}+(m-2)a\leq  (m-1)a-\frac{m-2}{b}-\frac{N}{2}<\hdots\leq \\
 & \leq 2a-\frac{1}{b}-\frac{N}{2}<\frac{N}{2}-\frac{1}{b}\leq a-\frac{N}{2}<0.
 \end{align*}
Thus for all 
$x\in [-\frac{a}{2}, 0]$,  $g(x+\frac{m}{b}+(1-m)a) g(x+\frac{m-1}{b}-ma)=0$, leading to 
$$|A_{m+1}(x)| = g(x+\frac{m}{b}-ma)\,|A_m(x)|=g(x+\frac{m}{b}-ma)\,\prod_{k=0}^{m-1}{g(x+\frac{k}{b}-ka)}= \prod_{k=0}^{m}{g(x+\frac{k}{b}-ka)}.$$ 
This establishes part (a).

\item[(b)] We first consider the base case $m=3$. Similarly to case (a),  if $a-\frac{N}{2}<\frac{N}{2}-\frac{1}{b}$, then $-\frac{a}{2}<2a-\frac{1}{b}-\frac{N}{2}<\frac{N}{2}-\frac{2}{b}+a<a-\frac{N}{2}<\frac{N}{2}-\frac{1}{b}<0$. Therefore we can consider the following  cases.

\begin{enumerate}
\item[(b-1)] If $x\in [-\frac{a}{2}, 2a-\frac{1}{b}-\frac{N}{2} ]\cup [\frac{N}{2}-\frac{2}{b}+a, a-\frac{N}{2}]\cup [\frac{N}{2}-\frac{1}{b}, 0]$,  $A_3(x)$ is either a triangular matrix, or its second column has only one nonzero entry, which is the diagonal entry. In any of these cases the determinant of $A_3(x)$ is the product of the diagonal entries.

\item[(b-2)] If  $x\in (2a-\frac{1}{b}-\frac{N}{2}, \frac{N}{2}-\frac{2}{b}+a)$  we have   $g(x-a)=0$ leading to 
\begin{equation*}
 |A_3(x)|=g(x) |A_{2,1}(x)|.
\end{equation*}

\item[(b-3)] If   $x\in (a-\frac{N}{2}, \frac{N}{2}-\frac{1}{b})$, we have $g(x+\frac{2}{b}-a)=0$ leading to 
\begin{equation*}
 |A_3(x)|=g(x+\frac{2}{b}-2a) |A_{1,0}(x)|. 
\end{equation*}

\end{enumerate}

Consequently, for each $x\in \displaystyle{\bigcup_{\ell =-2}^{-1} }\left(-\ell 
 a+\frac{\ell+1}{b}-\frac{N}{2}, \frac{N}{2}+\frac{\ell}{b}-(\ell+1)a \right), $ there exists a unique $\ell \in \{-2, -1 \}$ with $x\in \displaystyle{\left(-\ell 
 a+\frac{\ell+1}{b}-\frac{N}{2};\frac{N}{2}+\frac{\ell}{b}-(\ell+1)a\right)}$ and 

\begin{equation*}
 |A_3(x)|=  |A_{-\ell,-\ell-1}(x)|  \times \prod_{\begin{array}{c}\scriptstyle k=0 \\ \scriptstyle k\neq-\ell,-\ell-1\end{array}}^{2}{g(x+\frac{k}{b}-ka)}.
\end{equation*}

Now, suppose that part (b) holds  $m\geq3$ and  let us prove that it holds for  $m+1$.  Proceeding as above we have.

If $a-\frac{N}{2}<\frac{N}{2}-\frac{1}{b}$, then 
$$-\frac{a}{2}<ma-\frac{m-1}{b}-\frac{N}{2}<\frac{N}{2}-\frac{m}{b}+(m-1)a<(m-1)a-\frac{m-2}{b}-\frac{N}{2}$$
and 
$$(m-1)a-\frac{m-2}{b}-\frac{N}{2}<\frac{N}{2}-\frac{m-1}{b}+(m-2)a<\hdots<\frac{N}{2}-\frac{2}{b}+a <a-\frac{N}{2}<\frac{N}{2}-\frac{1}{b}<0$$
Therefore, for all $x \in S_{m+1}$,
we have  $g(x+\frac{m}{b}+(1-m)a)\, g(x+\frac{m-1}{b}-ma)=0$ leading to 
\begin{eqnarray*}
 |A_{m+1}(x)| &=& g(x+\frac{m}{b}-ma)|A_m(x)|
\end{eqnarray*}

Now let $x\in Z_{m+1}={\displaystyle \cup_{\ell=1}^{m}} I_\ell$ where 
$$
I_\ell= \left(\ell a-\tfrac{\ell-1}{b}-\tfrac{N}{2}, \tfrac{N}{2}-\tfrac{\ell}{b}+(\ell-1)a\right).
$$ Then for some $\ell\in \{1, \hdots, m\}$, $x\in I_\ell.$  Suppose that $\ell\leq m-1$ and let $x\in I_\ell$. The induction argument gives $$ |A_m(x)|= |A_{-\ell, -\ell-1}(x)|\times  \prod_{  \begin{array}{c}\scriptstyle k=0 \\ \scriptstyle k\neq -\ell, -\ell-1\end{array}}^{m-1} g(x+\frac{k}{b}-ka).$$
Consequently,

\begin{align*}
|A_{m+1}(x)|&= g(x+\frac{m}{b}-ma)|A_m(x)|\\
&= g(x+\frac{m}{b}-ma)  |A_{-\ell, -\ell-1}(x)| \times \prod_{  \begin{array}{c}\scriptstyle k=0 \\ \scriptstyle k\neq -\ell, -\ell-1\end{array}}^{m-1} g(x+\frac{k}{b}-ka)\\
&=|A_{-\ell, -\ell-1}(x)|\times  \prod_{  \begin{array}{c}\scriptstyle k=0 \\ \scriptstyle k\neq -\ell, -\ell-1\end{array}}^{m} g(x+\frac{k}{b}-ka).
\end{align*}

Finally, suppose that  $\ell=m$ and let $x\in I_m$. Then 
$g(x+\frac{m}{b}+(1-m)a)\, g(x+\frac{m-1}{b}-ma)>0.$ 
Note  that 
$$
I_m \subset \left[-\frac{a}{2}, (m-1)a-\frac{m-2}{b}-\frac{N}{2}\right] \subset \left[-\frac{a}{2}, (m-2)a-\frac{m-3}{b}-\frac{N}{2}\right]
$$ 
and that 
$$
|A_{m+1}(x)|=g(x+\frac{m}{b}-ma)\, |A_m(x)|-g(x+\frac{m}{b}+(1-m)a)g(x+\frac{m-1}{b}-ma)\, |A_{m-1}(x)|.
$$
Hence by the induction assumption, we have respectively 
$$
|A_m(x)|=\prod_{k=0}^{m-1}{g(x+\frac{k}{b}-ka)}\ \mbox{and}\ |A_{m-1}(x)|=\prod_{k=0}^{m-2}{g(x+\frac{k}{b}-ka)}.
$$

\noindent
Consequently, 
\begin{eqnarray*}
\left|A_{m+1}(x)\right|  &=& g(x+\frac{m}{b}-ma)\left|A_m(x)\right|-g(x+\frac{m}{b}+(1-m)a)g(x+\frac{m-1}{b}-ma)\left|A_{m-1}(x)\right|\\
                         &=& g(x+\frac{m}{b}-ma)\prod_{k=0}^{m-1}{g(x+\frac{k}{b}-ka)}-\\
                         && g(x+\frac{m}{b}+(1-m)a)g(x+\frac{m-1}{b}-ma) \prod_{k=0}^{m-2}{g(x+\frac{k}{b}-ka)}\\
                         &=& \prod_{k=0}^{m-2}{g(x+\frac{k}{b}-ka)}\left[g(x+\frac{m}{b}-ma)g(x+\frac{m-1}{b}-(m-1)a)- \right. 
                         \\
                         && \left. g(x+\frac{m}{b}+(1-m)a)g(x+\frac{m-1}{b}-ma)\right]\\
                         &=& \prod_{k=0}^{m-2}{g(x+\frac{k}{b}-ka)}\left|A_{m,m-1}(x)\right|\\
                         &=& \prod_{\begin{array}{c}\scriptstyle k=0 \\ \scriptstyle k\neq m,m-1\end{array}}^{m}{g(x+\frac{k}{b}-ka)\left|A_{m,m-1}(x)\right|}
\end{eqnarray*}
which concludes the proof. 
\end{enumerate}

\end{proof}

\begin{figure}
\includegraphics[width=1.5\textwidth,angle=-90]{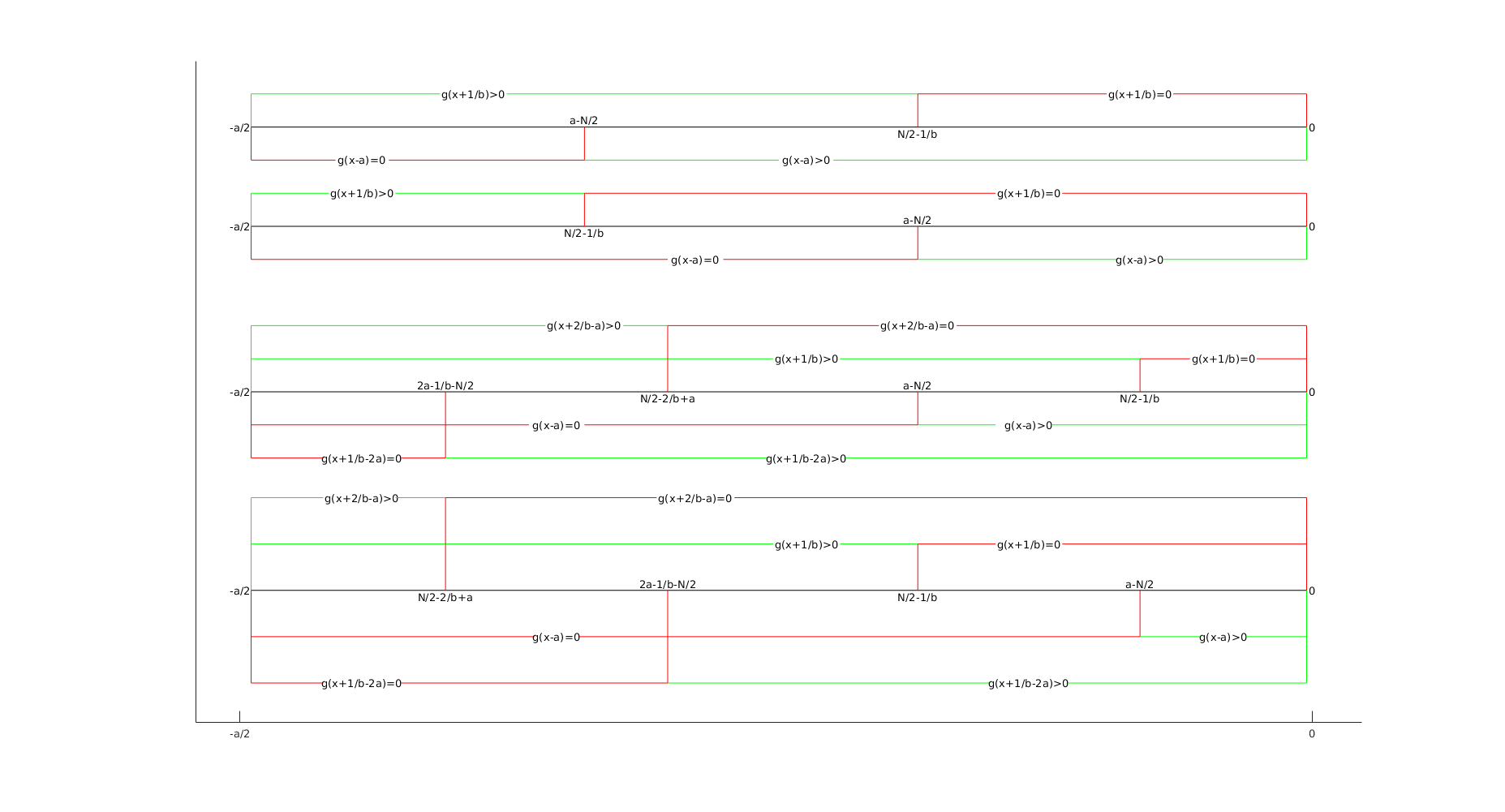}
 \caption{The off-diagonal entries of  $A_m(X)$ for  $m=2$ and $m=3$ when  $x\in [-\frac{a}{2}, 0].$ }
 \label{fig:figure3}
\end{figure}

\newpage

The next result relates the determinants $|G_m(x)|$ and $|A_m(x)|$.

\begin{cor}\label{cor:detgm} Given  $N\geq 2$ and $0<a<N$, suppose that  $g \in V_{N,a}$. Assume that $m\geq3$, and  let $(a,b)\in T_m$. Then for all $x\in [-a/2, 0]$, 
 \begin{equation*}
 |G_m(x)|=  |A_m(x)| \, \prod_{k=1-m}^{-1}{g(x+\frac{k}{b}-ka)}.
\end{equation*}

Furthermore, 
\item \begin{enumerate}
 \item[(a)] If $\frac{N}{2}-\frac{1}{b}\leq a-\frac{N}{2}$, then for all $x\in [-a/2, 0]$, 
\begin{equation*}
 |G_m(x)|=\prod_{k=1-m}^{m-1}{g(x+\frac{k}{b}-ka)}.
\end{equation*}

\item[(b)]  If $a-\frac{N}{2}<\frac{N}{2}-\frac{1}{b}$, then for all  $x\in S_m$, 
\begin{equation*}
 |G_m(x)|=\prod_{k=1-m}^{m-1}{g(x+\frac{k}{b}-ka)}, 
\end{equation*}
and for all $x\in Z_m$
\begin{equation*}
 |G_m(x)|= |A_{-\ell,-\ell-1}(x)|\, \prod_{\begin{array}{c}\scriptstyle k=1-m \\ \scriptstyle k\neq-\ell,-\ell-1\end{array}}^{m-1}{g(x+\frac{k}{b}-ka)},
\end{equation*} where the sets $S_m$ and $Z_m$ were defined in Lemma~\ref{lem:subdeter}.
\end{enumerate}

\end{cor}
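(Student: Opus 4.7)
The approach is to exploit the block upper-triangular structure of $G_m(x)$ recorded in Remark~\ref{rem:gmblock}. Writing
$$G_m(x)=\begin{bmatrix}A_m(x) & B_m(x)\\ 0 & C_m(x)\end{bmatrix}$$
with the $(m-1)\times m$ zero block in the lower-left corner, the standard block-determinant identity immediately yields $|G_m(x)|=|A_m(x)|\cdot|C_m(x)|$, so the whole task reduces to computing $|C_m(x)|$ and then combining with Lemma~\ref{lem:subdeter}.

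The first step is therefore to read off $|C_m(x)|$ from the explicit description in Remark~\ref{rem:gmblock}. Since $C_m(x)$ has nonzero entries only on its diagonal and first superdiagonal, it is upper triangular, so its determinant is the product of its diagonal entries:
$$|C_m(x)|=\prod_{k=1}^{m-1}g(x-k/b+ka)=\prod_{k=1-m}^{-1}g(x+k/b-ka),$$
where the second equality is just the reindexing $k\mapsto -k$. Combined with the block identity, this already produces the first displayed equation of the corollary, valid on the entire interval $[-a/2,0]$.

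For parts (a) and (b), I would simply substitute the appropriate formula for $|A_m(x)|$ from Lemma~\ref{lem:subdeter} into the identity just obtained. In case (a), and in case (b) for $x\in S_m$, the lemma gives $|A_m(x)|=\prod_{k=0}^{m-1}g(x+k/b-ka)$; multiplying by the expression for $|C_m(x)|$ merges the index ranges $\{0,\ldots,m-1\}$ and $\{1-m,\ldots,-1\}$ into the single contiguous range $\{1-m,\ldots,m-1\}$, which is precisely the claim. For $x\in Z_m$ in case (b), Lemma~\ref{lem:subdeter} keeps the factor $|A_{-\ell,-\ell-1}(x)|$ together with $\prod_{k\in\{0,\ldots,m-1\}\setminus\{-\ell,-\ell-1\}}g(x+k/b-ka)$; multiplying again by $|C_m(x)|$ merges the product ranges into $\{1-m,\ldots,m-1\}\setminus\{-\ell,-\ell-1\}$, yielding the stated formula.

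The only real obstacle is bookkeeping: recognizing the block upper-triangular decomposition (already provided by Remark~\ref{rem:gmblock} via Lemma~\ref{lem:diagonal}), verifying that $C_m(x)$ is upper triangular (immediate from its explicit form), and checking that the two product ranges merge correctly under the reindexing. There is no genuine analytic or computational difficulty beyond these routine verifications, so the proof is essentially an assembly of Remark~\ref{rem:gmblock} and Lemma~\ref{lem:subdeter}.
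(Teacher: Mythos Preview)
Your argument is correct and essentially identical to the paper's: the paper invokes the block decomposition from Remark~\ref{rem:gmblock} and computes $|G_m(x)|$ by (iterated) Laplace expansion along the last row, which amounts exactly to your use of the block-determinant identity together with the upper-triangularity of $C_m(x)$; it then likewise defers parts (a) and (b) to Lemma~\ref{lem:subdeter}.
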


\begin{proof} Recall that from Remark~\ref{rem:gmblock}, $G_m(x)$ can be written as a block matrix: for $x\in [-a/2, 0]$,
$$G_m(x)=\begin{bmatrix}A_m(x) & B_{m}(x)\\ 0 & C_m(x)\end{bmatrix}.$$ 

%
%

Now by computing the determinant of $G_m(x)$ using Laplace expansion by minors  along its last row, wee see that $$|G_m(x)|=\prod_{k=1-m}^{-1}{g(x+\frac{k}{b}-ka)}\cdot|A_m(x)|.$$
 
 The second part follows from Lemma~\ref{lem:subdeter}. 

\end{proof}

Finally, we can prove that the matrix  $G_m(x)$ is invertible for $x\in [-\frac{a}{2},\frac{a}{2}]$ under the assumptions of Theorem~\ref{th:main1}.

\begin{cor}\label{cor:invertgm} Given  $N\geq 2$ and $0<a<N$, suppose that  $g \in V_{N,a}$. Assume that $m\geq 1$, and  let $(a,b)\in T_m$.  Then, for all $x\in [-\frac{a}{2},\frac{a}{2}]$,  $|G_m(x)|>0.$  
\end{cor}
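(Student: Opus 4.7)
The plan is to reduce the claim to the half-interval $[-a/2,0]$ via the symmetry identity $|G_m(-x)|=|G_m(x)|$ from Remark~\ref{rem:parityh}, and then simply read off positivity of $|G_m(x)|$ from the explicit factorization already supplied by Corollary~\ref{cor:detgm}. Essentially all the hard work has been done; this corollary is an assembly result.

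For the base cases $m=1$ and $m=2$, I would first recall that $G_1(x)=g(x)$, and since $g\in V_{N,a}$ is continuous with support $[-N/2,N/2]$ and strictly increasing on $[-N/2,0]$, continuity forces $g(-N/2)=0$ and hence $g>0$ on $(-N/2,N/2)\supset[-a/2,a/2]$ (using $a<N$); the $m=2$ case is exactly the content of \cite[Theorem 1.2]{Olehon}, so nothing more is needed there.

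For $m\geq 3$, I would split according to the two cases of Corollary~\ref{cor:detgm}. In case (a), and in case (b) for $x\in S_m$, we have
\begin{equation*}
 |G_m(x)|=\prod_{k=1-m}^{m-1}g\!\left(x+\tfrac{k}{b}-ka\right),
\end{equation*}
and every factor on the right is strictly positive by Lemma~\ref{lem:diagonal}(a), which puts each argument $x+k/b-ka$ inside the open interval $(-N/2,N/2)$ where $g>0$. In case (b) for $x\in Z_m$, the corollary gives
\begin{equation*}
 |G_m(x)|=|A_{-\ell,-\ell-1}(x)|\prod_{\substack{k=1-m\\ k\neq -\ell,-\ell-1}}^{m-1}g\!\left(x+\tfrac{k}{b}-ka\right),
\end{equation*}
for the unique $\ell\in\{1-m,\dots,-1\}$ with $x$ in the corresponding subinterval of $Z_m$. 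The product is again positive by Lemma~\ref{lem:diagonal}(a), and the $2\times 2$ determinant $|A_{-\ell,-\ell-1}(x)|$ is strictly positive by Lemma~\ref{lem:submatrices} applied with index $-\ell\in\{1,\dots,m-1\}$.

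No genuine obstacle remains; the only potential pitfall is bookkeeping, namely verifying that the index $-\ell$ indexing the nontrivial $2\times 2$ block in Corollary~\ref{cor:detgm} falls in the range $\{1,\dots,m-1\}$ where Lemma~\ref{lem:submatrices} applies, and that none of the arguments $x+k/b-ka$ with $|k|\leq m-1$ ever reach $\pm N/2$ for $x\in[-a/2,0]$. Both are immediate consequences of the definition of $T_m$ already exploited in Lemma~\ref{lem:diagonal}(a), so the proof collapses to one or two lines after invoking the earlier results.
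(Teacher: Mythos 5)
Your proposal is correct and follows essentially the same route as the paper: reduce to $x\in[-\tfrac{a}{2},0]$ by the symmetry $|G_m(-x)|=|G_m(x)|$, invoke the factorization of $|G_m(x)|$ from Corollary~\ref{cor:detgm} (equivalently Lemma~\ref{lem:subdeter}), and conclude positivity of each factor from Lemma~\ref{lem:diagonal}(a) and Lemma~\ref{lem:submatrices}, with the cases $m=1,2$ handled by citing \cite{Lemniel} and \cite{Olehon}. The only difference is presentational: you spell out the case split of Corollary~\ref{cor:detgm} explicitly, whereas the paper summarizes it in one step.
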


\begin{proof} Recall that this result is known for $m=1, 2$, see,  \cite{Lemniel} and \cite{Olehon}.

 Let $x\in [-\frac{a}{2},0]$. By Corollary~\ref{cor:detgm}, we have  
 \begin{equation*}
 |G_m(x)|=\prod_{k=1-m}^{-1}{g(x+\frac{k}{b}-ka)}|A_m(x)|.
\end{equation*}
From Lemma~\ref{lem:subdeter}, we know that the determinant $|A_m(x)|$ is a product of 
 $g(x+\frac{k}{b}-ka)>0$, \mbox{$ k\in \left\{0,..,m-1\right\}$}, and 
$|A_{k,k-1}(x)|>0$, where $k\in \left\{1,...,m-1\right\}$. By  Lemma~\ref{lem:diagonal} and Lemma~\ref{lem:submatrices}, we conclude that $|G_m(x)|>0$ for all $ x\in [-\frac{a}{2},0]$, and by symmetry this holds for all $ x\in [-\frac{a}{2},\frac{a}{2}]$.

\end{proof}

We are now ready to prove Theorem~\ref{th:main1}.

\begin{proof}{\bf Proof of Theorem~\ref{th:main1}}

By Corollary~\ref{cor:invertgm} we know that $G_m(x)$ is invertible. Let $h$ be defined on $\R$ as follows. For  $x\in \R\setminus [-\frac{2m-1}{2}a,\frac{2m-1}{2}a]$ let $h(x)=0$, and for $x\in \left[-\frac{2m-1}{2}a,\frac{2m-1}{2}a\right]$ let $h$ be defined by 
$$
\begin{pmatrix}
 h(x+(1-m)a) \\ \vdots \\ h(x)\\ \vdots \\ h(x+(m-1)a) \end{pmatrix}=G^{-1}_m(x)\begin{pmatrix}0\\\vdots\\ 0\\ b\\0\\ \vdots \\0\end{pmatrix}=b(G^{-1}_m(x))_m$$ where $(G^{-1}_m(x))_m$ is the $m^{th}$ column vector of the matrix $G^{-1}_m(x).$
 
 Let $x\in (-\tfrac{a}{2}, 0]$, then we can solve for $h(x)$ for $x\in (-\tfrac{a}{2}+ka, ka]$ where $k\in \{1-m, \hdots, m-1\}$. By Remark~\ref{rem:parityh} we know that $h$ is even, so we can define $h$ on the interval $ [-\frac{2m-1}{2}a,\frac{2m-1}{2}a]$ except at finitely many points. But because $|G_m(x)|> 0$ for all $x\in [-\frac{a}{2},\frac{a}{2}]$, we conclude that $|G_m(x)|^{-1}$ is a continuous, hence a bounded function on $[-a/2, a/2]$. Consequently,  $h$  is a compactly supported and bounded function for which $\cG(h,a,b)$ is a Bessel sequence. By construction,  it also follows that  $g$ and $h$ are dual windows. 
\end{proof}

Corollary~\ref{cor:main1} is now easily proved:

\begin{proof}{\bf Proof of Corollary~\ref{cor:main1}}

It follows from the Theorem~\ref{th:main1} because the $T_m$ form a partition of $T$. 
\end{proof}

\begin{rem}\label{rem:hdiscont}
We can show that the dual constructed in Theorem~\ref{th:main1} is discontinuous. Indeed, for $x\in [-\tfrac{a}{2}, 0]$ $h(x)$ can be computed using Crammer's rule. Because the matrix $G_m(x)$ is upper triangular block matrix, one sees that $$h(x)=\tfrac{b\, |\tilde{A}_m(x)| \prod_{k=1-m}^{-1}g(x+\tfrac{k}{b}-ka)}{|G_m(x)|}$$ where $\tilde{A}_m(x)$ is the $(m-1)\times (m-1)$ matrix obtained by deleting the last column and the last row of $A_m(x)$. From Corollary~\ref{cor:invertgm} we conclude that $h(x)>0$ for $x\in [-\tfrac{a}{2}, 0]$. By symmetry, we know $h(x)>0$ on $[0, \tfrac{a}{2}]$. 

Now, let $x \in (\tfrac{a}{2}, a)$, and $y\in (-\tfrac{a}{2}, 0)$ such that $x=y+a$. Using again Crammer's rule and the structure of $G_m(x)$ it can be seen that $$h(x)=h(y+a)=\tfrac{|A_m(x)|\, |\tilde{C}_m(x)|}{|G_m(x)|}=0$$ where $\tilde{C}_m(x)$ is the $(m-1)\times (m-1)$ matrix obtained by replacing the first column of $C_m(x)$ by the $0$ vector (this comes from replacing column $m+1$ of $G_m(x)$ by the vector $be_{m+1}$ where $e_{m+1}$ is the $(m+1)^{th}$ standard unit vector). 

Therefore, $$0=\lim_{x\to \tfrac{a}{2}^{+}}h(x)\neq \lim_{x\to \tfrac{a}{2}^{-}}h(x)>0.$$

\end{rem}
We conclude the paper with the graph of the dual window $h$ for $(a, b)\in \{(9/10, 8/9), (3/4, 35/36)\}\subset  T_3$.

\begin{figure}[!h]
\includegraphics[scale=0.35]{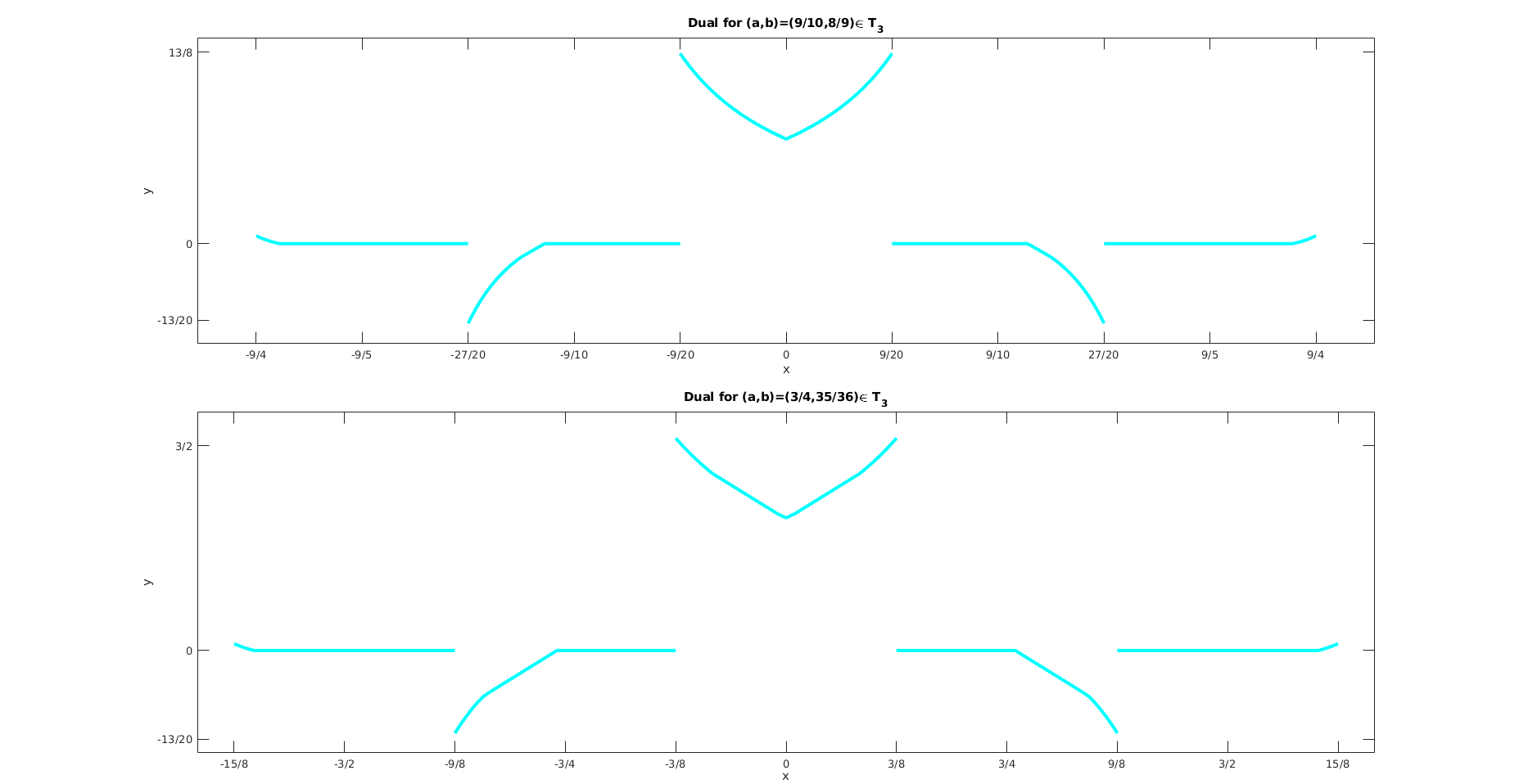}
 \caption{Graph of the dual window $h$ for $(a, b)\in \{(9/10, 8/9), (3/4, 35/36)\}\subset  T_3$ }
 \label{fig:figure4}
\end{figure}

\section*{Acknowledgements}
Part of this work was completed while the first-named author was a visiting graduate student in  the Department of Mathematics at the University of Maryland during the Fall 2017 semester. He would like to thank the Department for its hospitality and the African Center of Excellence in Mathematics and Application (CEA-SMA) at the  Institut de Math\'ematiques et de Sciences Physiques (IMSP) for funding his visit. K.~A.~Okoudjou  was partially supported by a grant from the Simons Foundation $\# 319197$, and by ARO grant W911NF1610008.



\bibliographystyle{amsplain}
\bibliography{splineFS}

\end{document}